\renewcommand*\libertine@figurestyle{LF}
\renewcommand*\libertine@figurestyle{OsF}
\def\csname ver@etex.sty\endcsname{3000/12/31}
\newtheorem{theorem}{Theorem}[section]
\newtheorem{lemma}[theorem]{Lemma}
\newtheorem{proposition}[theorem]{Proposition}
\theoremstyle{definition}
\newtheorem{definition}[theorem]{Definition}
\newtheorem{remark}[theorem]{Remark}
\definecolor{webbrown}{rgb}{0.65, 0.16, 0.16} 
\newcommand{\mb}[1]{\mathbb{#1}} 
\newcommand{\mc}[1]{\mathcal{#1}}
\newcommand{\N}{\mb{N}} 
\newcommand{\C}{\mb{C}} 
\newcommand{\Z}{\mb{Z}} 
\newcommand{\Q}{\mb{Q}}
\newcommand{\E}{\mathcal{E}}
\newcommand{\mZeta}[6]{\varsigma\left( \begin{smallmatrix}
 #1 & #2 & #3 \\
 #4 & #5 & #6
 \end{smallmatrix} 
 \right)}
\def\CP1{\mathbb{C}\mathrm{P}^1}
\DeclareMathOperator{\Aut}{Aut}
\newcommand{\id}{\mathord{\mathrm{id}}}
\begin{document}
\title[completed cycles Leaky Hurwitz numbers]{completed cycles Leaky Hurwitz numbers}

\author[D.~Accadia]{Davide Accadia}
\address{D.~A.: Dipartimento di Matematica, Informatica e Geoscienze, Università di Trieste, via Weiss 8, 34128 Trieste (IT)}
\email{davide.accadia@phd.units.it}
\author[M.~Karev]{Maxim Karev}
\address{M.~K: Guangdong-Technion Israel Institute of Technology, 241 Daxue road, Guangdong, 515063 Shantou (CN)}
\email{maksim.karev@gtiit.edu.cn}
\author[D.~Lewa\'nski]{Danilo Lewa\'nski}
\address{D.~L.: Dipartimento di Matematica, Informatica e Geoscienze, Università di Trieste, via Weiss 8, 34128 Trieste (IT)}
\email{danilo.lewanski@units.it}
\thanks{}

\begin{abstract}
We introduce $(r+1)$-completed cycles $k$-leaky Hurwitz numbers and prove piecewise polynomiality as well as establishing their chamber polynomiality structure and their wall crossing formulae. For $k=0$ the results recover previous results of Shadrin-Spitz-Zvonkine. The specialization for $r=1$ recovers Hurwitz numbers that are close to the ones studied by Cavalieri-Markwig-Ranganathan and Cavalieri-Markwig-Schmitt. The ramifications differ by a lower order \emph{torus correction} \textemdash \; not affecting the genus zero enumeration, nor the enumeration for leaky parameter values $k = \pm 1$ in all genera \textemdash \; which is however natural from the Fock space perspective. 
\end{abstract}

\maketitle
\tableofcontents

\section{Introduction}
\label{sec:intro}

Hurwitz numbers have been introduced by Adolf Hurwitz in \cite{Hur} as the enumeration of genus $g$, degree $d$ branched coverings of the Riemann sphere with given ramification profiles over a number of given fixed points. Over the last three decades, there have been many developments in the theory of Hurwitz numbers in different branches of mathematics and physics, including enumerative algebraic geometry, differential geometry, tropical geometry, combinatorics, representation theory,  integrable systems, and random matrix models. For a recent textbook on Hurwitz theory we refer to \cite{CM}.

Different types of Hurwitz numbers have proven to be of particular interest, and over type the scientific community populated a rich dictionary of adjectives in Hurwitz theory, which are more or less useful depending on the property studied or on the link with other branches of mathematics and physics. For instance, \emph{connected} (resp. \emph{possibly disconnected}) Hurwitz numbers refer to the topological connection (resp. possible disconnection) of the target ramified covering. Similarly, \emph{single} Hurwitz numbers (resp. \emph{double}) refer to the presence of one (resp. two) given ramification profiles over fixed points, whereas the other ramifications often fill out the remaining ramification locus with a given rule of a given type: \emph{simple} corresponds to filling out the ramification locus with simple ramification profiles, and is alternative to \emph{$(r+1)$-completed cycles} corresponding instead to filling with precise linear combinations of higher ramification index points related to the Gromov-Witten theory of smooth algebraic curves \cite{OP}. \emph{Tropical} Hurwitz numbers refers to their tropical geometric interpretation in terms of tropical covers, \emph{$b$-Hurwitz numbers} are related to not necessarily orientable covers provided with a way to measure the degree of non-orientability, \emph{strictly monotone} or \emph{weakly monotone} Hurwitz numbers refer to certain monotonicity condition on the sheets involved in the simple ramifications, Bousquet-M\'elou--Schaeffer numbers or Orlov--Scherbin numbers also can be cast in the framework of Hurwitz theory, as well as Grothendieck dessins d'enfant and many more \dots

It is a natural question to consider Hurwitz numbers as functions of the parts of the fixed ramification profiles (i.e. the ones coming from the single or double characterization). For example, it was observed in \cite{GJ} that single simple Hurwitz numbers in genus zero exhibit polynomiality in the entries of the only fixed ramification profile, up to a combinatorial prefactor, later generalized to arbitrary genus in \cite{ELSV}. Another feature often present in Hurwitz theory is the so-called piecewise polynomiality or chamber polynomiality, that is, certain types of double possibly disconnected Hurwitz numbers are piecewise polynomials simultaneously in the parts of the two fixed ramifications, where the difference between two polynomials in adjacent chambers separated by a wall is referred to as wall-crossing formula whenever it can be computed explicitly. We now recall the state of the art around the properties of Hurwitz numbers we are interested in for this work.

Double Tropical Hurwitz numbers have been studied in \cite{CJM10} and their wall crossing formulae established in \cite{CJM11}. In \cite{CJMR} the tropicalization of the celebrated Gromov-Witten/Hurwitz (GW/H, for shortness) correspondence \cite{OP} was established. The main computational tool employed to complete the original correspondence is the semi-infinite wedge, found in Hurwitz theory by \cite{Ok00} and later exploited in \cite{John} to prove the piecewise polynomiality of double Hurwitz numbers. Chamber polynomiality for Hurwitz numbers in genus zero was already investigated in \cite{SSV08} and the geometry of double Hurwitz numbers was studied in \cite{GJV05}.

 The same algorithm used by Johnson was then employed in \cite{SSZ} for completed cycles Hurwitz numbers, in \cite{AKL} for monotone and strictly monotone Hurwitz numbers (similar arguments appear in tropical Jucys covers \cite{AL, AL2}). The same formalism was also employed to prove the quasi-polynomiality of Hurwitz number in \cite{DBLPS, KLS, KLPS} and to reprove the Harer-Zagier formula for the Euler characteristic of the smooth moduli spaces \cite{L1}, among other applications.

The study of leaky Hurwitz numbers was initiated in \cite{CMR} and further investigated in their generalization with descendants in \cite{CMS}. 

In \cite{CMR} $k$-leaky double Hurwitz numbers were defined from logarithmic intersection theory on
pluricanonical double ramification cycles, recast as count of tropical curves (i.e. as tropical Hurwitz numbers) and expressed in terms of the Fock space formalism. It is moreover shown how to recover traditional double
Hurwitz numbers in this setting. Their piecewise polynomiality is established, though no wall-crossing formula is derived. 

In \cite{CMS} a new class of invariants is defined adding descendants, again directly from intersection theory. These numbers are called $k$-leaky double Hurwitz descendants, which generalize and interpolate between the $k$-leaky double Hurwitz numbers defined in \cite{CMR} and descendant integrals of double ramification cycles. Their interpretation in terms of tropical covers is described, and the piecewise polynomiality and the
the wall-crossing formula in genus zero is proved, together with the proof of their non-negativity and a complete characterization of their vanishing, again in the genus zero sector.

The origin of the role that integrability plays in enumerative algebraic geometry of moduli spaces of stable curves can be traced back to Witten conjecture~\cite{wit-1}, shortly after proved by Kontsevich in \cite{kon}. The integrability of Hurwitz number was conjectured by Pandharipande \cite{P} and shortly after proved by the already cited \cite{Ok00}. More generally, the key concept behind this interaction is represented by the definition of Cohomological Field Theory introduced by Kontsevich and Manin \cite{KM}, later followed by the Givental-Teleman reconstruction theory (see e.g. \cite{Tel}).

Hurwitz numbers have been proved to be intrinsically related to the intersection theory of the moduli space of stable curves in \cite{ELSV}. After that, many types of ELSV-type formulae have been found around different kinds of Hurwitz problems, that is, different kinds of Hurwitz numbers defined by changing the conditions on the possible ramifications. ELSV formulae have been found for example for monotone Hurwitz number \cite{ALS}, for Hurwitz numbers with a full ramification \cite{DL}, for orbifold Hurwitz numbers \cite{JPT, LPSZ} for the Euler characteristic of the smooth moduli space of curves in \cite{GLN} for double Hurwitz numbers \cite{doubleTR}, later deformed in \cite{ELSVTR}, to approach hyperelliptic Hodge integrals \cite{L3} and many other examples (for an overview on the topic see e.g. \cite{L2}). Several of these investigations were possible thanks to the computational Sage package \cite{DSvZ21}.

\subsubsection*{Results}

We continue this investigation by defining the $(r+1)$-completed cycles generalization of $k$-leaky double Hurwitz numbers, by defining them in terms of the Fock space, which in turn defines their tropical interpretation along the motto \emph{bosonification is tropicalization}. For $r=1$, we recover invariants that are close to the $k$-leaky double Hurwitz numbers of \cite{CMR}, although a correction arises naturally from the Fock space point of view. This correction 
is not present for the leaky parameter values $k=1$ and $k=-1$. Summarising, given a Hurwitz problem the following are among the most interesting aspects, or theorems, of study:

\begin{enumerate}
\item Geometric interpretation in terms of ramified covers of surfaces (or variations)
\item Tropical interpretation
\item Group algebraic interpretation
\item Character theoretical interpretation
\item Fock space vacuum expectation value interpretation
\item Piecewise polynomiality for possibly disconnected (although connected in the chambers) double Hurwitz numbers
\item Wall crossing formulae for possibly disconnected (although connected in the chambers) double Hurwitz numbers
\item Cut-and-join equation (usually for possibly disconnected) single or double Hurwitz numbers
\item Topological recursion for connected (usually for single, then for double) Hurwitz numbers
\item ELSV-type formula for connected (usually for single, then for double) Hurwitz numbers
\end{enumerate}

Essentially, in this paper, we define $(r+1)$-completed cycles $k$-leaky double Hurwitz numbers by (5), which also defines (2), and then we address and establish points (6), (7), (8). Our methods generalize and rely on the algorithm introduced by Johnson in \cite{John} for simple double Hurwitz numbers and later extended in \cite{SSZ} to the case of $(r+1)$-completed cycles nonleaky ($k=0$) double Hurwitz numbers.

\subsection{Organisation of the paper}
The paper begins with a preliminary section to recall the Fock operators in the semi-infinite wedge formalism \ref{sec:fock}, followed by the description of the Hurwitz numbers in the exam as vacuum expectation values, section \ref{sec:Hurwitz VEVs}.
In section \ref{sec:piecewise} we establish the first result of the paper, that is, the piecewise polynomiality of the double leaky Hurwitz numbers with completed cycles, and we develop their wall crossing formula in section \ref{sec:wallcrossing}. Section \ref{sec:cutnjoin} is devoted to the cut-and-join operators for the Hurwitz generating series. Section \ref{sec:geometry} discusses certain tropical geometric aspects and compares leaky Hurwitz numbers existing in the literature, finally section \ref{sec:conclusions} proposes research questions for the future development of leaky Hurwitz numbers in relation to neighboring branches of mathematics and physics.

\subsection{Acknowledgements} 
D. A. and D. L. are supported by the University of Trieste. D. L. is supported by the INdAM group GNSAGA, and by the INFN within the project MMNLP (APINE). The authors thank Cavalieri and Schmitt for useful discussions. 

\vspace{1cm}
\section{Semi-infinite wedge formalism and VEVs computation}
\label{sec:fock}

We introduce the operators needed for the derivation of our results. For a self-contained introduction to the infinite wedge space formalism, we refer the reader to \cite{OP,John}, where most relevant objects are defined. 
\par
Let $V = \bigoplus_{i \in \Z + \frac 12} \C \underline{i}$ be an infinite-dimensional complex vector space with a basis labeled by half-integers, written as \( \underline{i}\) for $i \in \Z + \frac 12$. The semi-infinite wedge space \( \mc{V} \coloneq \bigwedge^{\frac{\infty}{2}} V \) is the space spanned by vectors
\begin{equation}
\underline{k_1} \wedge \underline{k_2} \wedge \underline{k_3} \wedge \cdots
\end{equation}
such that for large \( i\), \( k_i + i -\frac{1}{2} \) equals a constant, called the charge, imposing that \( \wedge \) is antisymmetric. The charge-zero sector 
\begin{equation}
\mathcal{V}_0 = \bigoplus_{n \in \mathbb{N} } \bigoplus_{\lambda\,  \vdash\, n} \C v_{\lambda}
\end{equation}
is the span of all of the semi-infinite wedge products \( v_{\lambda} = \underline{\lambda_1 - \frac{1}{2}} \wedge \underline{\lambda_2 - \frac{3}{2}} \wedge \cdots \) for some integer partition~$\lambda$. The space $\mathcal{V}_0$ has a natural inner product $(\cdot,\cdot )$ defined by declaring elements $v_{\lambda}$ to be orthonormal. 
The element corresponding to the empty partition $v_{\emptyset}$ is called the vacuum vector and denoted by $|0\rangle$. Similarly, we call its dual in \( \mc{V}_0^* \) the covacuum vector, and denote it $\langle0|$. If $\mathcal{P}$ is an operator acting on $\mathcal{V}_0$, we denote with $ \langle \mathcal{P}\rangle$ the evaluation $ \langle 0 |  \mathcal{P} |0\rangle$.

For $k$ half-integer, define the operator $\psi_k$ by $\psi_k : (\underline{i_1} \wedge \underline{i_2} \wedge \cdots) \ \mapsto \ (\underline{k} \wedge \underline{i_1} \wedge \underline{i_2} \wedge \cdots)$, and let $\psi_k^{\dagger}$ be its adjoint operator with respect to~$(\cdot,\cdot)$. The normally ordered products of $\psi$-operators
\begin{equation}
E_{i,j} \coloneqq \begin{cases}\psi_i \psi_j^{\dagger}, & \text{ if } j > 0 \\
-\psi_j^{\dagger} \psi_i & \text{ if } j < 0 \end{cases} 
\end{equation}
are well-defined operators on $\mathcal{V}_0$. We will use in the following definitions and computations the functions
\begin{equation}
\varsigma (z)=e^{z/2} - e^{-z/2} = 2 \sinh(z/2), \qquad \qquad 
\mathcal{S}(z) = \frac{\varsigma (z)}{z}.
\end{equation} 
For $n$ any integer, and $z$ a formal variable, define the operators
\begin{equation}\label{eq:defE}
\mathcal{E}_n(z) = \sum_{k \in \Z + \frac12} e^{z(k - \frac{n}{2})} E_{k-n,k} + \frac{\delta_{n,0}}{\varsigma(z)}, \qquad \qquad \alpha_n = \mathcal{E}_n(0) = \sum_{k \in \Z + \frac12} E_{k-n,k}.
\end{equation}
Their commutation formulae are known to be
\begin{equation}\label{eq:commE}
  \left[\mathcal{E}_a(z),\mathcal{E}_b(w)\right] =
\varsigma\left(\det  \left[
\begin{matrix}
  a & z \\
b & w
\end{matrix}\right]\right)
\,
\mathcal{E}_{a+b}(z+w), 
\qquad \qquad 
[\alpha_k, \alpha_l ] = k \delta_{k+l,0}.
\end{equation}
 We will also use the $\mathcal{E}$-operator without the correction in energy zero, i.e.
\begin{equation}
\tilde{\mathcal{E}}_0(z) = \sum_{k \in \Z + \frac12} e^{zk} E_{k,k} = \sum_{r=0}^{\infty} \mathcal{F}_r z^r, 
\qquad \qquad
 \mathcal{F}^{(r)} \coloneqq \sum_{k\in\Z+\frac12} \frac{k^r}{r!} E_{k,k} 
\end{equation}
The operator $E = \mathcal{F}^{(1)}$ is called the \emph{energy operator} and the operator $C = \mathcal{F}^{(0)}$ is called the \emph{charge operator}. If
$$
[\mathcal{O}, \mathcal{F}^{(1)}] = e\mathcal{O},
$$ for some $e \in \Z$, we say that the
\emph{energy} of an operator $\mathcal{O}$ is equal to  e. Similarly, if $[\mathcal{O}, \mathcal{F}^{(0)}] = c\mathcal{O},$ we say that the \emph{charge} of $\mathcal{O}$ is $c$. If $\mathcal{O},\mathcal{Q}$ are operators with energies $e_1,e_2$ and charges $c_1,c_2,$ respectively, then their commutator $[\mathcal{O},\mathcal{Q}]$ has energy $e_1+e_2$ and charge $c_1+c_2.$
The operators $\mathcal{E}_{n}(z)$ and consequently the operators $\mathcal{F}^{(r)}$ and $\alpha_n$ have zero charge. The operators $\mathcal{E}_{n}(z)$ have energy $n$, and so do the operators $\alpha_n$, whereas for all $r\ge 0$ the operators $\mathcal{F}^{(r)}$ have energy $0$.

The Boson-Fermion correspondence (see the foundational book \cite{Solitons}, but also \cite{Ellena} for all details worked out) implies that the operators $\mathcal{E}_k(z)$ can themselves be expressed in terms of the $\alpha_\ell$ operators as 
\begin{equation}\label{eq:bosonfermion}
\mathcal{E}_k(z) = \frac{1}{\varsigma(z)}\sum_{n=0}^\infty \frac{1}{n!} \sum_{\substack{k_1,\dots,k_n = 0 \\ k_1 + \dots + k_{n} = k} } \varsigma (k_1 z) \cdots \varsigma (k_n z)  : \!\frac{\alpha_{k_1}}{k_1} \cdots \frac{\alpha_{k_n}}{k_n} \!: \;\;+\;\;  \frac{\delta_{n,0}}{\varsigma(z)}
\end{equation}

\subsection{Computation of VEVs}\label{subsec:computation}
In the following, we describe several simple facts that follow from the definitions as well as a handy algorithm to compute vacuum expectation values. For more details see the foundational book \cite{Solitons} as well as the paper \cite{John} in which these considerations were applied for explicit computations of Hurwitz numbers after the main result in \cite{Ok00} was established. 

\begin{definition}
Define the vacuum expectation value (VEV for short) of a Fock space operator $\mathcal{O}$ as
\begin{equation}
\langle \mathcal{O} \rangle \coloneqq (v_{\emptyset}, \mathcal{O}.v_{\emptyset}),
\end{equation}
where $(\bullet, \bullet)$ is the inner product defined by declaring the elements $v_{\lambda}$ for all partitions $\lambda$ orthonormal. In other words, the VEV $\langle \mathcal{O} \rangle$ enumerates how many summands equal to $v_{\emptyset}$ the operator $\mathcal{O} $ produces when applied to $v_{\emptyset}$, and weights them with the constant that gets produced by the application of the operator.
\end{definition}

 We provide here useful observations concerning the general vanishing of VEVs as well as an algorithm to compute them. We assume that we deal with operators with defined energy, and use a convention in which the index of an operator refers to its energy (this convention also applies to the operators $\mathcal{E}_k(z)$).

\begin{itemize}
\item[\emph{i).}] If $\mathcal{O}_e$ has negative energy $e<0$, then we have the vanishing of any VEV with $\mathcal{O}_e$ on the very left:
\begin{equation}
e<0 \; \implies \; \left\langle \mathcal{O}_e \prod_{i=1}^N \mathcal{O}_{k_i} \right\rangle =0, \qquad k_i \in \mathbb{Z}
\end{equation}
The vanishing also holds for zero energy operators, unless they are the identity operator, in which case they can be absorbed by multiplication with the identity element.
\item[\emph{ii).}] Adjointly, if $\mathcal{O}_e$ has positive energy $e>0$, then we have the vanishing of any VEV with $\mathcal{O}_e$ on the very right:
\begin{equation}
e>0 \; \implies \; \left\langle  \prod_{i=1}^N \mathcal{O}_{k_i} \mathcal{O}_e \right\rangle =0, \qquad k_i \in \mathbb{Z}
\end{equation}
The vanishing also holds for zero energy operators, unless they are the identity operator, in which case they can be absorbed by multiplication with the identity element.
\item[\emph{iii).}] VEVs of non-zero total energy vanish:
\begin{equation}
\sum_{i=1}^N k_i \neq 0 \; \implies \; \left\langle  \prod_{i=1}^N \mathcal{O}_{k_i} \right\rangle =0.
\end{equation}
\item[\emph{iv).}] If the second leftmost operator has negative energy, then by fact $i).$ commuting the first two operators results in a single term:
\begin{equation}
e_2<0 \; \implies \; \left\langle \mathcal{O}_{e_1} \mathcal{O}_{e_2}  \prod_{i=1}^N \mathcal{O}_{k_i} \right\rangle = \left\langle \left[\mathcal{O}_{e_1}, \mathcal{O}_{e_2} \right] \prod_{i=1}^N \mathcal{O}_{k_i} \right\rangle, \qquad k_i \in \mathbb{Z}.
\end{equation}
The vanishing also holds for zero energy operators, unless they are the identity operator, in which case they can be absorbed by multiplication with the identity element.
\item[\emph{v).}] Adjointly, if the second right-most operator has positive energy, then by fact $ii).$ commuting the last two operators results in a single term:
\begin{equation}
e_1>0 \; \implies \; \left\langle \prod_{i=1}^N \mathcal{O}_{k_i} \mathcal{O}_{e_1} \mathcal{O}_{e_2} \right\rangle = \left\langle \prod_{i=1}^N \mathcal{O}_{k_i}  \left[\mathcal{O}_{e_1}, \mathcal{O}_{e_2} \right] \right\rangle, \qquad k_i \in \mathbb{Z}
\end{equation}
The vanishing also holds for zero energy operators, unless they are the identity operator, in which case they can be absorbed by multiplication with the identity element.
\item[\emph{vi).}] If a VEV of $N$ operators is non zero, after absorption of possible identity operators, by $i).$ and $ii).$ the left (right) most operator $\mathcal{O}_e$ has to have positive (negative) energy. Hence one can start with the left (right) most operator of the VEV, and commute it in a finite number of steps all the way passed the remaining $N-1$ operators to the right (left), producing many terms along the way. The produced terms are VEVs containing $N-2$ initial operators and exactly one commutator between two of them, with the only exception of the last term in which the operator $\mathcal{O}_e$ reached the last position on the right (left), which indeed vanishes by $i).$ (respectively, by $ii).$).
\begin{align}
\left\langle \mathcal{O}_{e_1} \mathcal{O}_{e_2} \cdots \mathcal{O}_{e_N} \right\rangle &= \left\langle \left[\mathcal{O}_{e_1}, \mathcal{O}_{e_2}\right] \mathcal{O}_{e_3} \cdots \mathcal{O}_{e_N}\right\rangle  + \left\langle \mathcal{O}_{e_2} \mathcal{O}_{e_1}  \cdots \mathcal{O}_{e_N} \right\rangle
\\
&= \left\langle \left[\mathcal{O}_{e_1}, \mathcal{O}_{e_2}\right] \mathcal{O}_{e_3} \cdots \mathcal{O}_{e_N}\right\rangle  + \left\langle \mathcal{O}_{e_2} \left[\mathcal{O}_{e_1}, \mathcal{O}_{e_3} \right]  \cdots \mathcal{O}_{e_N} \right\rangle + \left\langle \mathcal{O}_{e_2} \mathcal{O}_{e_3} \mathcal{O}_{e_1}  \cdots \mathcal{O}_{e_N} \right\rangle
\\
& \cdots
\\
&= \sum_{i=2}^N \left\langle \cdots \left[\mathcal{O}_{e_1}, \mathcal{O}_{e_i}\right] \cdots \mathcal{O}_{e_N}\right\rangle + \left\langle \mathcal{O}_{e_2} \mathcal{O}_{e_3}   \cdots \mathcal{O}_{e_N} \mathcal{O}_{e_1}\right\rangle
\\
&= \sum_{i=2}^N \left\langle \cdots \left[\mathcal{O}_{e_1}, \mathcal{O}_{e_i}\right] \cdots \mathcal{O}_{e_N}\right\rangle
\end{align}
At each step of the application of this procedure, two new terms appear. Following~\cite{John}, we refer to the term that involves the switching of two operators as the \emph{passing term}, and the terms that involve the commutator as the \emph{canceling term}.
\item[\emph{vii).}] After possible simplifications of the remaining terms, one can define the commutators as new operators and consider each of the finitely many obtained terms as a VEV in at most $N-1$ operators $\mathcal{O}'_{f_1}, \dots, \mathcal{O}'_{f_{N-1}}$ with well-defined energies and charges. The key point of \emph{vi).} is then that starting with one VEV of $N$ operators, one ends up with $N-1$ VEVs of $N-1$ operators each. For each term, the procedure can be repeated, at the ends of which one ends up with at most $(N-1)(N-2)$ nonvanishing VEVs each of which has $N-2$ operators. After $N-1$ iterations one ends up with at most $(N-1)!$ VEVs of a single zero energy operator each: these terms either vanish or are of the form 
\begin{equation}
\left\langle \mathcal{O}_{0} \right\rangle = \left\langle \alpha \cdot \id \right\rangle =  \alpha \left\langle \id \right\rangle = \alpha, \qquad \qquad \text{for some non zero constant } \alpha
\end{equation}
Summing up all such constants leads to the result:
\begin{equation}
\left\langle \mathcal{O}_{e_1} \mathcal{O}_{e_2} \cdots \mathcal{O}_{e_N} \right\rangle = \sum_{j=1}^{M} \alpha_j, \qquad \qquad M \leq (N-1)!
\end{equation}
\item[\emph{viii).}] Notice that the procedure described in \emph{vi).}, \emph{vii).} establishes an explicit algorithm to compute any semi-infinite wedge VEV, provided one can compute the commutators effectively. De facto, the computation gets much easier if the initial operators $\mathcal{O}_{e_i}$ belong to a graded by the energy Lie subalgebra of the algebra of operators with the known commutation relations. In this case we can make the computations faster, by considering applying the rules to the leftmost operator of negative energy and moving it to the right, in general, it reduces the number of terms. In the following we are going to use this version of the algorithm.
\end{itemize}

To visualize the algorithm's operation, we model its steps as a rooted tree. The nodes of this tree are naturally graded by their distance to the root. The tree is constructed inductively by the following procedure:

\begin{itemize}
\item \emph{Root Node:} The root represents the initial VEV.
\item \emph{Induction step:} Every node at a distance 
$n$ from the root, which represents a non-zero VEV of operators with at least one operator having energy different from $0$, has two direct descendants:
\begin{itemize}
\item One descendant corresponds to the canceling term.
\item The other corresponds to the passing term.
\end{itemize}
\item \emph{Termination:} A vertex, representing the VEV which is equal to $0$, or a VEV of several energy zero
operators does not produce any further descendants. This marks the end of the branching at that node. 
\item \emph{Edges labelings:} Edges representing canceling terms are labeled with the structure constant of the operator algebra, derived from the commutator of the corresponding operators.
\end{itemize}

We refer to this tree as the \emph{VEV-computation tree}. The paths in the VEV-computation tree that start in the root and terminate in the vertices labeled by VEVs of products of energy zero operators are referred to as \emph{essential paths.} See Figure~\ref{fig:algorithm} for the details. Given a VEV-computation tree, the initial VEV can be computed as the sum over the set of its essential paths of the products of the labels of vertices multiplied by the VEV in the terminal vertex of the path.

\begin{figure}
\begin{tikzpicture}[
    every node/.style={}, 
    every edge/.style={draw, -latex} 
]
\begin{scope}

\node (root) at (0,0) {$\left\langle \mathcal O_2\mathcal O_1 \mathcal O_{-3} \right\rangle$};

\node [below left=2cm and 1cm of root] (left) {$\left\langle \mathcal O_2 \mathcal O_{-2}\right\rangle$};
\node [below right=2cm and 1cm of root] (right) {$\left\langle\mathcal O_2 \mathcal O_{-3} \mathcal O_1\right\rangle = 0$};

\draw (root) -- node[left] {$p(1,-3)$} (left);
\draw (root) -- node[right] {} (right);

\node [below left=1.5cm and 0.5cm of left] (left-left) {$\left\langle \mathcal O_0 \right \rangle$};
\node [below right=1.5cm and 0.5cm of left] (left-right) {$\left \langle \mathcal O_{-2} \mathcal O_2 \right \rangle = 0$};

\draw (left) -- node[left] {$p(2,-2)$} (left-left);
\draw (left) -- node[right] {} (left-right);


\end{scope}

\end{tikzpicture}
\caption{A VEV-computation tree. We compute the VEV of operators $\mathcal{O}_2,\mathcal O_1,\mathcal O_{-3}$ with the commutation relation $[\mathcal O_{e_1},\mathcal O_{e_2}] = p(e_1,e_2)\mathcal O_{e_1 + e_2}$, where $p$ is some function. In each node, its right son is the corresponding passing term, and its left son is the corresponding canceling term. The edges connecting a vertex with the corresponding canceling term is marked by the value of the function $p$ that appears as the factor. To compute the commutator we have to take the sum over all essential paths of products of all the labels written on the edges of the path, and the label of the bottom-most vertex of the path. In the depicted case, there is only one essential path, so the VEV in question equals $p(1,-3)p(2,-2)\langle \mathcal O_0 \rangle$.
}\label{fig:algorithm}
\end{figure}
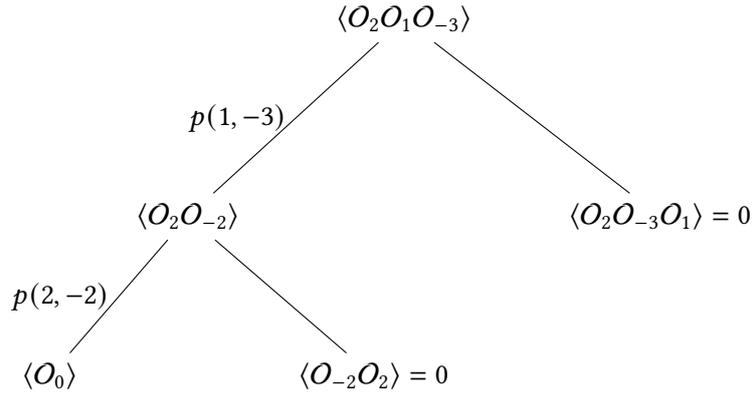

\vspace{1cm}
\section{Leaky Hurwitz numbers as VEVs}
\label{sec:Hurwitz VEVs}

We are now ready to define the Hurwitz numbers of interest in terms of the semi-infinite wedge formalism. Let $k$ be an integer number, $g,s$ be non-negative integers, and $\mu,\nu$ be partitions of sizes $l(\mu)$ and $l(\nu)$ respectively.\par

\begin{definition}
The (possibly disconnected) $k$-leaky $(r+1)$-completed cycles Hurwitz numbers are given by:
\begin{equation}
h_{g;\mu, \nu}^{\wr,k,r} \coloneqq [z_1^{r+1} \cdots z_s^{r+1}].\frac{1}{\prod\mu_i\prod\nu_j}\left\langle \prod_{i=1}^m \alpha_{\mu_i} \prod_{p=1}^s \mathcal{E}_{-k}(z_p) \prod_{j=1}^n\alpha_{-\nu_j}\right\rangle\,,
\end{equation}
where the operator $[x_1^{d_1} \cdots x_N^{d_N}]$ applied to a formal power series in the variables $x_1, \dots, x_N$ selects the coefficient of the monomial $x_1^{d_1} \cdots x_N^{d_N}$ (e.g. $[x^3]. \sum_{i=0} a_i x^i = a_3$) and $s$ is linked with $g, k, n, m$ via the Riemann-Hurwitz linear constraint:
$$
g \coloneqq \frac{rs +2 - m - n}{2} \in \N
$$
and is required to be an integer. For the case $k = 0$ we mean that the operators $\tilde \E_0(z_i)$ are used (see~\cite{SSZ} for explanation). Also, for $k=0$ the number $g$ is the genus of the ramified covering Riemann surfaces which are enumerated by the Hurwitz numbers, see \cite{CM}. We will interchangeably focus either on the genus, using the notation $h_{g;\mu, \nu}^{\wr,k,r}$, or on the number of operators $\E_k(z)$ in the VEV using the notation $h_{\mu, \nu}^{\wr,k,r,s}$.
\end{definition}

As customary in enumerative geometry, the connected and the possibly disconnected counterparts of the same enumerative problem are related by the inclusion-exclusion formula.

\begin{definition}
 The connected $k$-leaky $(r+1)$-completed cycles Hurwitz numbers are defined as
\begin{equation}
h_{g;\mu, \nu}^{\wr,k,r, \circ} \coloneqq [z_1^{r+1} \cdots z_s^{r+1}].\frac{1}{\prod\mu_i\prod\nu_j}\left\langle \prod_{i=1}^m \alpha_{\mu_i} \prod_{p=1}^s \mathcal{E}_{-k}(z_p) \prod_{j=1}^n\alpha_{-\nu_j}\right\rangle^{\circ},
\end{equation}
where the connected correlators $\left\langle \bullet \right\rangle^{\circ}$ are related to the possibly disconnected correlators $\left\langle \bullet \right\rangle$ via the inclusion-exclusion formula:
\begin{equation}\label{eq:inclexcl}
\left\langle 
\mathcal{O}_{(1)} \dots \mathcal{O}_{(N)} 
\right\rangle^{\circ}
\coloneqq
\sum_{M \vdash \{1, \dots, N\}} (-1)^{|M| - 1} (|M| - 1)!
\prod_{i=1}^{|M|}
\left\langle 
\mathcal{O}_{((M_i)_1)} \dots \mathcal{O}_{((M_i)_{\ell(M_i)})} 
\right\rangle ;
\end{equation}
where $M$ is a set partition into subsets with elements ordered increasingly. This formula can be inverted in the usual way into the more familiar inclusion-exclusion formula:
\begin{equation}\label{eq:inclexclinv}
\left\langle 
\mathcal{O}_{(1)} \dots \mathcal{O}_{(N)} 
\right\rangle
=
\sum_{M \vdash \{1, \dots, N\}}
\prod_{i=1}^{|M|}
\left\langle 
\mathcal{O}_{((M_i)_1)} \dots \mathcal{O}_{((M_i)_{\ell(M_i)})}
\right\rangle^{\circ}.
\end{equation}
\end{definition}

\begin{remark}
Setting $k=0$ recovers the usual $(r+1)$-completed cycles Hurwitz numbers studied in \cite{SSZ} (see also references within). Setting furthermore $r=1$ recovers the simple double Hurwitz numbers studied in \cite{John}. Setting $r=1$ and keeping the leaky parameter $k$ arbitrary recovers the simple double $k$-leaky Hurwitz numbers studied in \cite{CMS} up to a correction that is discussed in section \ref{sec:geometry}.
\end{remark}

We introduce the following notation.

\begin{definition}
For any subsets $I\subset [l(\mu)] =\{1, \dots, l(\mu)\}$, $J\subset [l(\nu)]$, and $K\subset [s]=\{1, \dots, s\}$
we denote by $\mu_I$, $\nu_J$, and $z_K$ the corresponding sums
$\mu_I :=\sum_{i\in I}\mu_i$, $\nu_J :=\sum_{j\in J}\nu_j$, and $z_K :=\sum_{\ell \in K} z_\ell$. Let $t$ be an integer.
Define the operators:
\begin{equation}\label{eq:EE}
\E (I,J+t,K) \coloneqq \E_{\mu_I - \nu_J - kt} \left( z_K \right), \qquad \qquad t = 0, \dots, s.
\end{equation}

Let $M\subset [l(\mu)]$, $N\subset [l(\nu)]$, and $L\subset [s]$, we use:
\begin{equation}\label{eq:zz}
\mZeta{I}{J + t}{K}{M}{N + v}{L} 
:= \varsigma \left(\det \left( {\begin{smallmatrix}
 \mu_I - \nu_J - tk & z_K \\
 \mu_M - \nu_N - vk & z_L
 \end{smallmatrix} }\right)\right).
\end{equation}
\end{definition}

The possibly disconnected $k$-leaky completed $(r+1)$-cycles Hurwitz number then can be written as:
\begin{align}\label{algorithmVacuumExpectation}
h_{\mu,\nu}^{\wr,k,r,s} &= \frac{1}{\prod_{i=1}^{l(\mu)} \mu_i \prod_{j=1}^{l(\nu)} \nu_j}\cdot [z_1^{r+1} \cdots z_s^{r+1}] \\
\notag & \left\langle \prod_{i=1}^{l(\mu)}\mathcal{E}(\{i\},\emptyset + 0,\emptyset)  \prod_{\ell=1}^{s} \mathcal{E}(\emptyset,\emptyset + 1,\{ \ell \})
\prod_{j=1}^{l(\nu)}\mathcal{E}(\emptyset,\{j\} + 0,\emptyset)\right\rangle.
\end{align}
Similarly, the connected $k$-leaky completed $(r+1)$-cycles Hurwitz number then can be written as:
\begin{align}\label{algorithmVacuumExpectation}
h_{\mu,\nu}^{\wr,k,r,s, \circ} &= \frac{1}{\prod_{i=1}^{l(\mu)} \mu_i \prod_{j=1}^{l(\nu)} \nu_j}\cdot [z_1^{r+1} \cdots z_s^{r+1}] \\
\notag & \left\langle \prod_{i=1}^{l(\mu)}\mathcal{E}(\{i\},\emptyset + 0,\emptyset)  \prod_{\ell=1}^{s} \mathcal{E}(\emptyset,\emptyset + 1,\{ \ell \})
\prod_{j=1}^{l(\nu)}\mathcal{E}(\emptyset,\{j\} + 0,\emptyset)\right\rangle^{\circ} .
\end{align}

As stated in the previous section, the computations of the VEVs simplifies if the corresponding class of operators is closed under the commutator. We exploit the fact that the operators $\mathcal{E}_k(z)$ form a Lie algebra by equation \eqref{eq:commE}, and use its explicit structure constants. The following Proposition immediately follows from the definitions \eqref{eq:EE} and \eqref{eq:zz}.

\medskip

\begin{proposition}\label{prop:structureconstants}
    
For any subsets $I, M \subset [l(\mu)]$, $J,N \subset [l(\nu)]$ and $K,L \subset [s]$ such that $I \cap M=J \cap N=K \cap L=\emptyset$, we have:
\begin{align}\label{commutationEquation}
 \left[\E(I, J+t, K) ,\E(M, N+v, L) \right]
= \mZeta{I}{J+t}{K}{M}{N+v}{L}
\E\left(I \cup M, J \cup N + t + v, K \cup L\right).
\end{align}
\end{proposition}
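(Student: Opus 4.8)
The plan is to reduce the statement to a direct application of the fundamental commutation relation \eqref{eq:commE} for the bare operators $\E_n(z)$, with the disjointness hypotheses supplying the only bookkeeping. First I would unfold the definition \eqref{eq:EE} on the left-hand side: setting $a := \mu_I - \nu_J - kt$, $b := \mu_M - \nu_N - kv$, $z := z_K$, and $w := z_L$, the commutator $[\E(I,J+t,K),\E(M,N+v,L)]$ is literally $[\E_a(z),\E_b(w)]$. Applying \eqref{eq:commE} then yields
$$
[\E_a(z),\E_b(w)] = \varsigma\!\left(\det\begin{bmatrix} a & z \\ b & w \end{bmatrix}\right)\E_{a+b}(z+w),
$$
and it remains only to identify the two factors on the right with the asserted expressions.

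For the scalar prefactor, substituting back the values of $a,b,z,w$ reproduces verbatim the determinant appearing inside $\varsigma$ in the definition \eqref{eq:zz}, so this factor is already equal to $\mZeta{I}{J+t}{K}{M}{N+v}{L}$ with no further manipulation. For the operator $\E_{a+b}(z+w)$, I would invoke the three disjointness assumptions. Since $I\cap M=\emptyset$, the sums over parts add linearly, $\mu_I+\mu_M=\mu_{I\cup M}$; likewise $\nu_J+\nu_N=\nu_{J\cup N}$ from $J\cap N=\emptyset$, and $z_K+z_L=z_{K\cup L}$ from $K\cap L=\emptyset$. Therefore $a+b=\mu_{I\cup M}-\nu_{J\cup N}-k(t+v)$ and $z+w=z_{K\cup L}$, which by \eqref{eq:EE} is exactly $\E\left(I\cup M, J\cup N + t + v, K\cup L\right)$, completing the identification.

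The only point deserving attention — and the closest thing to an obstacle in an otherwise formal computation — is precisely this additivity of the index sums under union. It is guaranteed by the hypotheses $I\cap M=J\cap N=K\cap L=\emptyset$, and it is what ensures the energy subscript and the formal variable of the resulting $\E$-operator combine as claimed; were the index sets allowed to overlap, the overlapping parts would be double-counted and the subscripts would fail to add linearly, so the clean form of the right-hand side would break down. Everything else is a formal substitution into \eqref{eq:commE}, so I expect the proof to be short.
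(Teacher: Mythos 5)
Your proposal is correct and matches the paper's treatment: the paper simply asserts that the proposition ``immediately follows from the definitions \eqref{eq:EE} and \eqref{eq:zz}'' (together with the commutation relation \eqref{eq:commE}), which is exactly the unfolding-and-substitution you carry out, with the disjointness hypotheses guaranteeing additivity of the index sums.
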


\vspace{1cm}
\section{Geometric and tropical interpretation}
\label{sec:geometry}

In this section we briefly comment on the comparison between leaky completed cycles Hurwitz numbers, specialized to the value $r=1$, with the non-completed Hurwitz numbers already studied in the literature, mainly in \cite{CMR} and in \cite{CMS} without descendants. 
More explicitly, in \cite{CMR}[Definition 5.2.1] the following operator is defined:
\begin{equation}
M_k \coloneqq \frac{1}{3!} \sum_{\substack{a+b+c = k \\ a,b,c \neq 0}} :\alpha_a \alpha_b \alpha_c :
\end{equation}
and the non-completed $k$-leaky double Hurwitz numbers for $r=1$ analyzed in the paper are expressed in the Fock space by \cite{CMR}[Theorem 5.2.2] as the VEV:
\begin{equation}
h_{\mu, \nu}^{\wr,k,r=1,s} = \frac{1}{|\Aut(\mu)||\Aut(\nu)|} \frac{1}{\prod \mu_i\prod \nu_j} \left\langle \prod_{i=1}^m \alpha_{\mu_i} M_{-k}^s \prod_{j=1}^n\alpha_{-\nu_j}\right\rangle.
\end{equation}
The automorphism factors purely depend on the geometric conventions adopted and do not play much role in the comparison. Instead, these numbers are essentially the completed Hurwitz numbers specialized to $r=1$ :
\begin{equation}
h_{\mu, \nu}^{\wr,k, r=1,s} = [z_1^{2} \cdots z_s^{2}].\frac{1}{\prod\mu_i\prod\nu_j}\left\langle \prod_{i=1}^m \alpha_{\mu_i} \prod_{p=1}^s \mathcal{E}_{-k}(z_p) \prod_{j=1}^n\alpha_{-\nu_j}\right\rangle,
\end{equation}
with the difference that the completed version carries a "\emph{torus correction}" naturally arising from the Boson-Fermion correspondence \eqref{eq:bosonfermion} for the $\E$-operators:
\begin{equation}
[z^{2}] \mathcal{E}_{-k}(z) = M_{-k} + \frac{(k+1)(k-1)}{24} \alpha_{-k}.
\end{equation}
Notice that this correction is usually undetectable in the non-leaky case, i.e. for $k=0$, since the operator $\alpha_0$ corresponds to the charge operator and all computation takes place in the charge zero sector, hence this term vanishes as soon as it starts interacting with anything else. What is curious though, is that the torus correction is not there for the values of $k=1$ and $k=-1$. It would be interesting to have a more geometric understanding of this fact possibly coming from Gromov-Witten theory of from intersection theory of the moduli space of stable curves.

\vspace{1cm}
\section{VEVs computation and strong piecewise polynomiality}
\label{sec:piecewise}

In this section, we generalise the algorithm for computation of double Hurwitz numbers in~\cite{John}, extended in \cite{SSZ} to completed $(r+1)$-cycles, to the case of leaky completed $(r+1)$-cycles Hurwitz numbers, although it has also been applied to other types of Hurwitz numbers, see e.g. \cite{AKL} for applications to monotone, strictly monotone Hurwitz numbers as well as hypergeometric tau functions. 

The same argument follows through in our case, with minor modifications. This approach allow us to prove the piecewise polynomiality of such Hurwitz numbers in this section, and the wall crossing formula in the next. 

With the same notation as in the previous section, we now apply the algorithm with a minor modification for the computation of VEVs described in section \ref{subsec:computation} to the operators $\mathcal{E}(I,J+t,K)$. This modification allows to reduce the number of terms involved in the algorithm.

For fixed $\mu,\nu,s,k$ introduce the \emph{coarse VEV-computation tree}\footnote{\cite{John,SSZ} use a notion of the set of \emph{commutation patterns}, which is essentially equivalent to it.}, which essentially contains the same information as the VEV-computation tree, but more adjusted to the combinatorics of partitions. The coarse VEV-computation tree is also constructed inductively. We compute the VEV:
\[
\left\langle \prod_{i=1}^{l(\mu)}\mathcal{E}(\{i\},\emptyset + 0,\emptyset)  \prod_{\ell=1}^{s} \mathcal{E}(\emptyset,\emptyset + 1,\{ \ell \})
\prod_{j=1}^{l(\nu)}\mathcal{E}(\emptyset,\{j\} + 0,\emptyset)\right\rangle.
\]
\begin{itemize}
\item \emph{Root Node:} The root represents the initial VEV. It is marked by the sequence:
\begin{align}
\left( (\{1\},\emptyset + 0,\emptyset);\ldots;(\{l(\mu)\},\emptyset + 0,\emptyset);
(\emptyset,\emptyset +1,\{1\});\ldots;(\emptyset,\emptyset + 1,\{s\}); \right. 
\\ 
\left. (\emptyset, \{1\} + 0,\emptyset);\ldots; (\emptyset,\{l(\nu)\} + 0,\emptyset) \right)
\end{align}
of arguments of the initial $\E$-operators.
\item \emph{Induction step:} Every node $p$ at a distance 
$n$ from the root, which represents a non-zero VEV of operators with at least one operator having energy different from $0$ is marked by a sequence of arguments of the corresponding $\E$-operators:
\begin{align}
\left( (I_{p,1},J_{p,1} + t_{p,1},K_{p,1});\ldots ;(I_{p,s(p)},J_{p,s(p)} + t_{p,s(p)},K_{p,s(p)}) \right)
\end{align}
 Let $n_p = min\{j = 1,\ldots,s\,|\, \mu_{I_i} - \nu_{J_i} - kt_i < 0 \}$ -- the position of the leftmost $\E$-operator with negative energy. By the requirement that the corresponding VEV should be non-zero, $n_p>1$. The node $p$ has two direct descendants:
\begin{itemize}
\item The descendant $pc$ corresponds to the canceling term. Its marking is 
\begin{align}
\left( (I_{pc,1},J_{pc,1} + t_{pc,1},K_{pc,1});\ldots; (I_{pc,s(pc)},J_{pc,s(pc)} + t_{pc,s(pc)},K_{pc,s(pc)})\right)  \\ = \left( I_{p,1},J_{p,1} + t_{p,1},K_{p,1}); \ldots; (I_{p,n_p-1}\cup I_{p,n_p},J_{p,n_p - 1} \cup J_{p,n_p} + t_{p,n_p - 1} + t_{p,n_p} ,K_{p,n_p - 1} \cup K_{p,n_p});\right.\\ \left. \ldots; (I_{p,s(p)},J_{p,s(p)} + t_{p,s(p)},K_{p,s(p)}) 
\right)
\end{align}
in agreement with Proposition~\ref{prop:structureconstants}.
\item The descendant $pp$ corresponds to the passing term. Its marking is 
\begin{align}
\left( (I_{pc,1},J_{pc,1} + t_{pc,1},K_{pc,1});\ldots; (I_{pc,s(pc)},J_{pc,s(pc)} + t_{pc,s(pc)},K_{pc,s(pc)})\right)  \\ = \left( I_{p,1},J_{p,1} + t_{p,1},K_{p,1}); \ldots; (I_{p,n_p},J_{p,n_p} + t_{p,n_p},  K_{p,n_p}); (I_{p,n_p-1},J_{p,n_p - 1} + t_{p,n_p-1} ,K_{p,n_p - 1}  );\right.\\ \left. \ldots; (I_{p,s(p)},J_{p,s(p)} + t_{p,s(p)},K_{p,s(p)})\right)
\end{align}
\end{itemize}
\item \emph{Termination:} A vertex, representing the VEV which is equal to $0$, or a VEV of several energy zero
operators does not produce any further descendants. This marks the end of the branching at that node. 
\item \emph{Edges labelings:} The edge connecting the node $p$ and its descendant $pc$ is labeled by the the value of $\varsigma$-function:
\[
\mZeta {I_{n_p - 1}} {J_{n_p - 1} + v_{n_p - 1}} {K_{n_p - 1}} {I_{n_p}}  {J_{n_p} + v_{n_p}}  {K_{n_p}} 
\]
in agreement with Proposition~\ref{prop:structureconstants}. See Figure~\ref{fig:coarse} for an example.
\end{itemize}

\begin{figure}
\begin{tikzpicture}[
    every node/.style={font = \small}, 
    every edge/.style={draw, -latex} 
]
\begin{scope}

\node (root) at (0,0) {$\left\langle \mathcal \E\left(\{1\},\emptyset + 0,0\right)\E(\emptyset,\emptyset + 1,\{1\})\E(\emptyset,\{1\}+0,\emptyset) \right\rangle$};

\node [below left=2cm and -3cm of root] (left) {$\left\langle \mathcal \E(\{1\},\emptyset + 1,\{1\})\E(\emptyset,\{1\}+0,\emptyset) \right\rangle$};
\node [below right=2cm and 0cm of root] (right) {$0$};

\draw (root) -- node[left] {$\mZeta {\{1\}} {\emptyset  + 0} {\emptyset} {\emptyset}  {\emptyset + 1}  {\{1\}} $} (left);
\draw (root) -- node[right] {} (right);

\node [below left=1.5cm and -2cm of left] (left-left) {$\left\langle \E(\{1\},\{1\} + 1,\{1\}) \right \rangle$};
\node [below right=1.5cm and 0.5cm of left] (left-right) {$0$};

\draw (left) -- node[left] {$\mZeta {\{1\}} {\emptyset  + 1} {\{1\}} {\emptyset}  {\{1\} + 0}  {\emptyset}$} (left-left);
\draw (left) -- node[right] {} (left-right);


\end{scope}

\end{tikzpicture}
\caption{A coarse VEV-computation tree constructed in the assumption $k > 0$ and the both $\mu$ and $\nu$ have one part only. Notice, that the same coarse VEV-computation tree works for, say, $\mu = (5), k = 1, \nu = (4)$, and, say, $\mu = (6), k = 3, \nu = (3)$.
}\label{fig:coarse}
\end{figure}
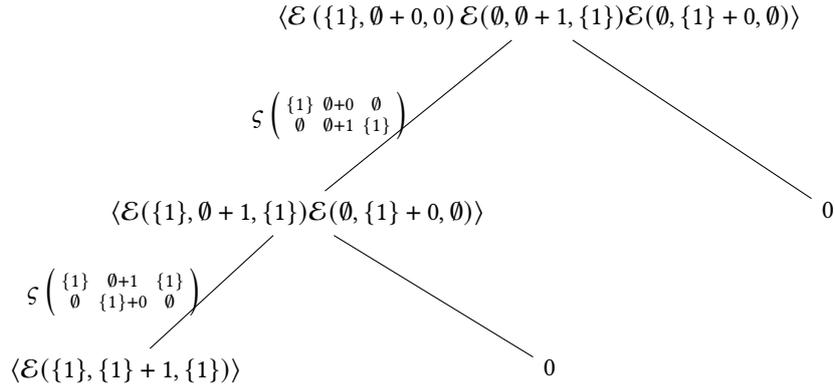

As in the case of the general VEV-computation tree, the initial VEV can be computed by the following expression. Let $EP_{\mu,\nu,k}^s$ be the set of the essential paths in the coarse VEV-computation tree. For $P \in EP_{\mu,\nu,k}^s$ the symbol $T(P)$ denotes the number of factors of energy zero operators in the terminating vertex of $P$, and let $S \vdash [s]$ denote the set partition relative to the variables in each $\mathcal{E}_0$-operator factor in the terminal vertex. The set of edges of the path $P$ that correspond to canceling terms are denoted $L(P)$, and the edge $l\in L(P)$ carries the label
\[
\mZeta{I_l}{J_l + t_l}{K_l}{M_l}{N_l + v_l}{L_l}.
\]
By \emph{i)., ii).}, the only surviving component of a VEV constituted by energy zero operators is their identity component, and by equation \eqref{eq:defE} we have that
\begin{gather}
\left\langle \mathcal{E}_0(z_S)\right. = \frac{1}{\varsigma(z_S)} \left\langle 0\right|, \\ \left.\mathcal{E}_0(z_S) \right\rangle = \frac{1}{\varsigma(z_S)} \left| 0\right\rangle.
\end{gather}
so the VEVs in the terminal vertices an essential path $P$ evaluates to 
\[
\left(\prod_{p = 1}^{T(P)} \frac{1}{\varsigma(z_{S_p})}\right).
\]

It is not hard to see (see e.g. \cite{SSZ}) that the condition $T(P) = 1$ exactly determines the connected correlators via the inclusion-exclusion formula in equation \eqref{eq:inclexclinv}. 
This completes the proof of the following theorem.

\begin{theorem}\label{thm:Hzeta}
Let $\mu,\nu$ be two partitions, and $k,s$ be two natural numbers. The VEV $h^{\wr,k,r,s}_{\mu,\nu}$ can be expressed as the following sum over the essential paths of the corresponding coarse VEV-computation tree:
\begin{align}\label{eq:computationH} 
h_{\mu,\nu}^{\wr,k,r,s} = \frac{1}{\prod_{i} \mu_i \prod_{j} \nu_j}[z_1^{r+1}\cdots z_s^{r+1}]
\sum_{P \in EP_{\mu,\nu,k}^{s}} \left(\prod_{p = 1}^{T(P)} \frac{1}{\varsigma(z_{S_p})}\right) \prod_{l\in L(P)}\mZeta 
{I_l}{J_l + t_l}{K_l}{M_l}{N_l + v_l}{R_l}
\end{align}
Moreover, the connected Hurwitz number $h_{\mu,\nu}^{\wr,k,r, s,\circ}$ is equal to the sum over the essential paths $P$ of the corresponding VEV-computation tree that satisfy $T(P) = 1:$
\begin{align} \label{eq:computationHcirc} 
h_{\mu,\nu}^{\wr,k,r,s,\circ} = \frac{1}{\prod_{i} \mu_i \prod_{j} \nu_j}[z_1^{r+1}\cdots z_s^{r+1}] 
\frac{1}{\varsigma(z_{[s]})} \sum_{P \in EP_{\mu,\nu,k}^{s,\circ}} \prod_{l\in L(P)}\mZeta
{I_l}{J_l + t_l}{K_l}{M_l}{N_l + v_l}{R_l},
\end{align}
where $[s]=\{1, \dots, s\}$.
\end{theorem}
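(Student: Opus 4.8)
The plan is to start from the vacuum expectation value representation of $h^{\wr,k,r,s}_{\mu,\nu}$ and to run the VEV-computation algorithm of \cref{subsec:computation} in its coarse form. The crucial input is \cref{prop:structureconstants}: the family of operators $\{\E(I,J+t,K)\}$ is closed under the commutator, with structure constants given precisely by the $\varsigma$-determinant symbols $\mZeta{I}{J+t}{K}{M}{N+v}{L}$. Consequently, every application of the commutation steps \emph{vi).}--\emph{viii).} keeps us inside this family, so the branching produced by the algorithm (in the optimized variant of \emph{viii).}, where at each node one moves the leftmost negative-energy operator, in position $n_p$, past its left neighbour) is faithfully recorded by the coarse VEV-computation tree constructed above, and the canceling edges are labelled exactly by the corresponding $\varsigma$-determinants.

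Next I would unwind each essential path $P \in EP^s_{\mu,\nu,k}$. By construction every canceling edge carries its structure constant, so the product over $l \in L(P)$ yields $\prod_{l\in L(P)} \mZeta{I_l}{J_l+t_l}{K_l}{M_l}{N_l+v_l}{R_l}$, while passing edges carry no weight. The terminal vertex of $P$ is a VEV of energy-zero operators $\E_0(z_{S_p})$, one for each block $S_p$ of the induced set partition $S \vdash [s]$. By facts \emph{i).} and \emph{ii).} only their identity component survives, and \eqref{eq:defE} shows that each factor contributes $1/\varsigma(z_{S_p})$. Collecting the contributions of all essential paths, dividing by $\prod_i \mu_i \prod_j \nu_j$ and extracting the coefficient $[z_1^{r+1}\cdots z_s^{r+1}]$ then produces exactly \eqref{eq:computationH}.

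For the connected statement I would invoke the inclusion--exclusion formula \eqref{eq:inclexclinv}. The point is that the integer $T(P)$, the number of energy-zero factors in the terminal vertex, records the number of connected components into which the operators of the initial VEV are grouped along $P$, and $S$ is the corresponding set partition of the completed-cycle insertions $[s]$. Each terminal factor $\E_0(z_{S_p})$ is the output of a sub-correlator that is connected on its block; hence summing over all essential paths reassembles the disconnected VEV as the sum over set partitions of products of connected correlators demanded by \eqref{eq:inclexclinv}. Matching the two expansions term by term, $h^{\wr,k,r,s,\circ}_{\mu,\nu}$ is precisely the contribution of the essential paths with a single terminal factor, $T(P)=1$; for such paths the unique energy-zero operator is $\E_0(z_{[s]})$, whose VEV contributes the global factor $1/\varsigma(z_{[s]})$, which can be pulled out of the sum, yielding \eqref{eq:computationHcirc}.

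The main obstacle is this last identification: one must show that the multiplicative factorization of the terminal VEV over its energy-zero blocks corresponds \emph{exactly} to the product over connected components in \eqref{eq:inclexclinv}, so that the constraint $T(P)=1$ cuts out the connected part. Concretely, one must verify that a terminal block $S_p$ is produced if and only if the operators feeding into it form a single connected piece of the commutation process, and that distinct blocks evolve independently, so that the generating-function identity inverting \eqref{eq:inclexclinv} is reproduced path by path. This is the combinatorial heart of the argument and follows the reasoning of \cite{SSZ}; once it is in place, the remainder is the routine bookkeeping of the algorithm together with the closure property of \cref{prop:structureconstants}.
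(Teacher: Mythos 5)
Your proposal is correct and follows essentially the same route as the paper: run the coarse VEV-computation algorithm using the closure of the $\E(I,J+t,K)$ under commutation (\cref{prop:structureconstants}) to label canceling edges with the $\varsigma$-determinants, read off the factor $\prod_p 1/\varsigma(z_{S_p})$ from the energy-zero terminal vertices, and identify the $T(P)=1$ paths with the connected correlator via inclusion--exclusion. The paper likewise defers the last identification to \cite{SSZ} with the remark that it is ``not hard to see,'' so your explicit flagging of that step as the combinatorial heart matches, and even slightly sharpens, the published argument.
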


\begin{remark}\label{rem:dual}
    Notice, that we have the following duality. For any $\mu,\nu, k$
    \[
    h^{\wr,k,r,s}_{\mu,\nu} = h^{\wr,-k,r,s}_{\nu,\mu}.
    \]
    This formula is implied by the following considerations. For the left-hand part of the equality we can use the algorithm in its version that moves the negative energy operators to the left, and for the right-hand side we can use the version of the algorithm that moves positive energy operators to the right. For both cases, the corresponding coarse VEV-commutation trees coincide.   
\end{remark}

In the remaining part of this section, we prove two more results about leaky completed cycles Hurwitz numbers: the strong piecewise polynomiality, also referred to as chamber polynomiality, and an analysis on the structure of these polynomials. Both results generalize for arbitrary leaky number $k$ (and recover for $k=0$) the original results of Johnson~\cite{John} for double Hurwitz numbers and their completed cycles generalization in~\cite{SSZ}.

Notice, that the coarse VEV-computation tree does not really contain the information about $\mu,\nu,k$ -- only about the sign of the energy of the operators $\E(I,J+t,K)$ that arise in the process of the algorithm operation.

\begin{definition}\label{def:hyperplane:arrangement}
For fixed natural numbers $n,m,s,r$, define the map 
\begin{align}
\mathfrak{h}: V \coloneqq \left\{(\vec{x},\vec{y},k) \in \N^n \oplus \N^m \oplus \N \,\left|\, \sum_{i=1}^n x_i = \sum_{j=1}^m y_j + s\cdot k \right. \right\} & \longrightarrow \Q
\\
(\mu,\nu,k) & \mapsto \mathfrak{h}(\mu, \nu,k) \coloneqq h^{\wr,k,r, s}_{\mu,\nu}
\end{align}
as well as the hyperplanes $W^{\wr}_{I,J,t}$ of $V$ through the linear equations 
\begin{equation}
W^{\wr}_{I,J,t} : x_I - y_J - kt = 0, \qquad \qquad \emptyset \neq I \subsetneq [m], \;\; \emptyset \neq J \subsetneq [n], \;\; t = 0, \dots, s.
\end{equation}
We call $W^{\wr}_{I,J,t}$ \emph{walls of the hyperplane arrangement}, and the connected components of $V$ minus all walls are called \emph{chambers of the hyperplane arrangement}.
\end{definition}

The partitions $\mu,\nu$ can be interpreted as sequences of natural numbers. So, we can define what it means for a triple $(\mu,\nu,k)$ to belong to a chamber or a wall of the hyperplane arrangement. Notice, that it does not depend on the way we order the parts of $\mu$ and $\nu$. Also notice, that for a pair of triples $(\mu,\nu,k)$ and $(\mu',\nu',k')$ that belong to the same chamber of the hyperplane arrangement, the corresponding coarse VEV-computation trees are identical.

\begin{remark}\label{rem:wall}
    Strictly speaking, we also should have included the hyperplane $k = 0$ to the list of the walls, as the coarse VEV-computation trees for the cases $k<0$ and $k>0$ are different. The reason we don't add this wall to the list will become clear shortly.
\end{remark}

\begin{lemma}\label{lem:chamber}
Let $\mathfrak c$ be a chamber of the hyperplane arrangement for $V$. Then for any $(\mu,\nu,k)\in V$ with $l(\mu) = m,l(\nu) = n,$ that belong to $\mathfrak{c}$ we have
$$
h^{\wr,k,r, s}_{\mu,\nu}=h^{\wr,k,r, s,\circ}_{\mu,\nu}
$$
\begin{proof}
This is a well-known fact from infinite wedge formalism (e.g. \cite{John, SSZ, AKL}). It can be seen from the fact that the equalities that define the walls correspond to possible ways to have energy zero operators throughout the algorithm of computation of the possibly disconnected VEV defining the Hurwitz number. The operator $\mathcal{E}(I, J + t, K)$ has energy 
$$
\mu_I - \nu_J - kt.
$$
So for a triple $(\mu,\nu,k)$ that belongs to a chamber of the hyperplane arrangement, all the terminal vertices for essential paths in the coarse VEV-computation tree correspond to VEVs of a single energy zero operator. The assertion follows.
\end{proof}
\end{lemma}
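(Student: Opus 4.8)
The plan is to deduce the identity directly from the tree expansion of Theorem~\ref{thm:Hzeta}, reducing it to the combinatorial statement that inside a chamber every essential path ends in a \emph{single} energy-zero operator, i.e.\ $T(P)=1$. First I would compare the two expressions \eqref{eq:computationH} and \eqref{eq:computationHcirc}. The possibly disconnected number weights an essential path $P$ by $\prod_{p=1}^{T(P)} \varsigma(z_{S_p})^{-1}$, while the connected number sums only over those $P$ with $T(P)=1$ and carries the single prefactor $\varsigma(z_{[s]})^{-1}$; in both cases the canceling edges contribute the same product of labels $\mZeta{I_l}{J_l+t_l}{K_l}{M_l}{N_l+v_l}{R_l}$. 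When $T(P)=1$ the unique terminal $\E_0$-operator absorbs all the middle insertions, so $S_1=[s]$ and the two weights agree. Hence the $T(P)=1$ part of the disconnected sum equals the connected sum term by term, and the lemma is equivalent to showing that no essential path has $T(P)\ge 2$ for $(\mu,\nu,k)\in\mathfrak c$.

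For the key step I would use the fact, recorded just above Definition~\ref{def:hyperplane:arrangement}, that the coarse VEV-computation tree is literally the same for all triples of a given chamber. Fix an essential path $P$. Its terminal vertex is a product of energy-zero operators $\E(I_p,J_p+t_p,K_p)$, $p=1,\dots,T(P)$, whose index sets form partitions of $[m]$, $[n]$ and $[s]$, with $t_p=|K_p|$. Energy zero means that the linear form $\ell_p(\vec x,\vec y,k):=x_{I_p}-y_{J_p}-t_p k$ vanishes at the given triple. Because the same terminal vertex occurs at \emph{every} point of the chamber --- termination of the algorithm is exactly the condition that the remaining operators have vanishing energy, a condition governed only by the signs of energies that are constant on $\mathfrak c$ --- the form $\ell_p$ vanishes on an open subset of the hyperplane $V$, and therefore vanishes identically on $V$.

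It then remains a one-line computation on $V=\{\sum_i x_i-\sum_j y_j-sk=0\}$. A form $x_I-y_J-tk$ vanishing identically on $V$ must be a scalar multiple of the defining form; matching the coefficients of the $x_i$ forces the scalar into $\{0,1\}$, and then $(I,J,t)$ is either $([m],[n],s)$ or $(\emptyset,\emptyset,0)$. Each terminal block is a nonempty part of the partition, so it is not the empty block; hence every block is the full block, and since the blocks are disjoint there can be only one, i.e.\ $T(P)=1$. Together with the first paragraph this yields $h^{\wr,k,r,s}_{\mu,\nu}=h^{\wr,k,r,s,\circ}_{\mu,\nu}$ on $\mathfrak c$.

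The delicate point is the passage in the second paragraph from ``$\ell_p$ vanishes at one triple'' to ``$\ell_p$ vanishes identically on $V$'': this is exactly where the constancy of the combinatorial type of the tree across the \emph{open} chamber must be invoked, married with the elementary fact that a nonzero linear form cannot vanish on an open subset of $V$. A related subtlety, flagged in Remark~\ref{rem:wall}, is that the middle operators have energy $-k$, so their sign --- and with it the tree --- changes across $k=0$, which is not among the listed walls; I would handle this either by restricting a chamber to a fixed sign of $k$, or by reducing the opposite sign to it through the duality of Remark~\ref{rem:dual}. The coefficient-matching of the third paragraph is otherwise routine, modulo the degenerate cases $m=0$ or $n=0$ that do not arise for honest partitions.
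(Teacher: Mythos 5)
Your overall strategy is the same as the paper's: reduce the identity to the claim that every essential path of the coarse VEV-computation tree terminates in a \emph{single} energy-zero operator, i.e.\ $T(P)=1$, after which the $T(P)=1$ part of the disconnected expansion \eqref{eq:computationH} visibly coincides with the connected expansion \eqref{eq:computationHcirc}. Where you differ is in how you justify $T(P)=1$. The paper argues directly that the wall equations are exactly the conditions under which a proper sub-collection of operators can acquire energy zero, so off the walls the only energy-zero operator that can form is the full one $\mathcal{E}([m],[n]+s,[s])$. You argue indirectly: the tree is constant on the chamber, so each terminal block's energy-zero condition holds on an open subset of $V$, hence the linear form $x_{I_p}-y_{J_p}-t_p k$ vanishes identically on $V$, hence it is a multiple of the defining form, hence the block is full. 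The coefficient-matching in your third paragraph is correct and packages the conclusion nicely.

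The genuine weak point is your second paragraph. The constancy of the coarse tree on a chamber is logically equivalent to the statement that no energy $\mu_I-\nu_J-kt$ arising during the algorithm changes sign inside a chamber, i.e.\ that every hyperplane $\{x_I-y_J-kt=0\}$ realizable by the algorithm appears among the listed walls. Invoking that constancy in order to prove that no proper block can reach energy zero is therefore circular: you are assuming precisely the sign-rigidity you need. Moreover, with the walls as literally given in Definition~\ref{def:hyperplane:arrangement} (both $I$ and $J$ nonempty and proper), the hyperplanes $\{x_I-kt=0\}$ with $J=\emptyset$, $t\ge 1$ are not walls, yet for $k>0$ they are realized by blocks built only from $\alpha_{\mu_i}$'s and $\mathcal{E}_{-k}(z_p)$'s. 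For instance, for $\mu=(2,5)$, $\nu=(1)$, $k=2$, $s=3$ there are no listed walls at all (no $J$ satisfies $\emptyset\neq J\subsetneq[1]$), yet the algorithm reaches the terminal vertex $\langle \mathcal{E}_0(z_1)\,\mathcal{E}_0(z_2+z_3)\rangle$ with $T(P)=2$, coming from the energy-zero block with $\mu_{\{1\}}-k\cdot 1=0$. So either the wall list must be enlarged to all nontrivial triples $(I,J,t)$ --- which is surely what is intended, and is exactly what your linear-algebra step characterizes --- or such blocks must be explicitly ruled out. The paper's own one-line proof glosses over the same point, but your open-set detour, by routing everything through the ``trees are identical across a chamber'' claim, hides exactly the hypothesis that can fail; the direct check (every energy produced by the algorithm has the form $\mu_I-\nu_J-kt$ and is nonzero off the suitably enlarged wall set) is shorter and makes that hypothesis visible.
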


\begin{theorem}\label{PiecewisePolynomialleaky}
The function $\mathfrak{h} \colon V \to \Q$ is piecewise polynomial with walls $W^{\wr}_{I,J,t}$. 
\end{theorem}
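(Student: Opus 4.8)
The plan is to reduce the statement to a single chamber and then read off polynomiality from the explicit essential-path formula of \cref{thm:Hzeta}. Fix a chamber $\mathfrak c$ of the arrangement. By the observation preceding \cref{lem:chamber}, the coarse VEV-computation tree, and hence the entire combinatorial skeleton of the sum in \eqref{eq:computationH} — the set of essential paths, which edges are canceling, the subsets $I_l,J_l,K_l,M_l,N_l,R_l$ attached to each label, and the terminal set partition $S$ — is constant on $\mathfrak c$ and depends on $(\mu,\nu,k)$ only through the chamber. Moreover, by \cref{lem:chamber} we have $h^{\wr,k,r,s}_{\mu,\nu}=h^{\wr,k,r,s,\circ}_{\mu,\nu}$ on $\mathfrak c$, so every essential path satisfies $T(P)=1$ and I may work with the connected formula \eqref{eq:computationHcirc}, in which the only energy-zero contribution is the single universal factor $1/\varsigma(z_{[s]})$. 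Thus, on $\mathfrak c$, the map $\mathfrak h$ is one fixed finite expression, and the only quantities that vary are the numerical values of the operator energies entering the labels.

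Next I would extract the $(\mu,\nu,k)$-dependence from the labels. Writing $\Delta_l$ for the determinant inside the $l$-th label, so that $\prod_{l\in L(P)}\mZeta{I_l}{J_l+t_l}{K_l}{M_l}{N_l+v_l}{R_l}=\prod_{l\in L(P)}\varsigma(\Delta_l)$, each $\Delta_l=(\mu_{I_l}-\nu_{J_l}-t_lk)\,z_{R_l}-(\mu_{M_l}-\nu_{N_l}-v_lk)\,z_{K_l}$ is bilinear: linear in the formal variables $z_\ell$ and linear in $(\mu,\nu,k)$ through the energies. Since $\varsigma(w)=\sum_{j\ge 0}w^{2j+1}/\big((2j+1)!\,4^{j}\big)$ is an odd power series with rational coefficients, expanding each label turns $\prod_{l\in L(P)}\varsigma(\Delta_l)$ into a power series in $z_1,\dots,z_s$ whose coefficient of every fixed monomial is a polynomial in $(\mu,\nu,k)$ of degree equal to the $z$-degree of that monomial. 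Performing the coefficient extraction $[z_1^{r+1}\cdots z_s^{r+1}]$ then involves only the finitely many terms of $z$-degree at most $s(r+1)+1$, so for each path it yields a polynomial in $(\mu,\nu,k)$ of degree at most $s(r+1)+1$; summing over the finitely many essential paths of $\mathfrak c$ preserves polynomiality, and it is exactly this boundedness of degree that produces a genuine polynomial rather than a formal series.

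The step requiring genuine care — and the one I expect to be the main obstacle — is the factor $1/\varsigma(z_{[s]})$, because $\varsigma(z_{[s]})=\varsigma(z_1+\cdots+z_s)$ has a pole at $z_{[s]}=0$ and $1/(z_1+\cdots+z_s)$ is not a power series in the individual $z_\ell$. I would handle this exactly as in \cite{John,SSZ}: the sum $\sum_{P}\prod_{l\in L(P)}\varsigma(\Delta_l)$ over essential paths is divisible by $\varsigma(z_{[s]})$ — equivalently, the connected correlator is a bona fide power series — so that after cancellation $\tfrac{1}{\varsigma(z_{[s]})}\sum_{P}\prod_{l}\varsigma(\Delta_l)$ is a power series in $z_1,\dots,z_s$ with coefficients polynomial in $(\mu,\nu,k)$, legitimizing the extraction above. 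One then divides by the prefactor $1/\prod_i\mu_i\prod_j\nu_j$: the commutator structure of the initial operators (each $\alpha_{\pm}$ enters a first label whose determinant carries its index) forces the extracted polynomial to be divisible by $\prod_i\mu_i\prod_j\nu_j$, so that $\mathfrak h$ stays polynomial on $\mathfrak c$. This divisibility is the \emph{strong} form of the statement, and it is the one point I would verify carefully rather than merely quote.

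Finally I would identify the walls. On each chamber $\mathfrak h$ is a single polynomial by the above, so the polynomial can only change across a locus where the tree skeleton changes, i.e. where some operator energy $\mu_I-\nu_J-kt$ vanishes; these are precisely the hyperplanes $W^{\wr}_{I,J,t}$ of \cref{def:hyperplane:arrangement}, which proves the asserted wall structure. It remains, as flagged in \cref{rem:wall}, to explain why $k=0$ need not be adjoined to this list even though the $k<0$ and $k>0$ trees differ: the two chamber polynomials agree across $k=0$. I would deduce this either from the duality $h^{\wr,k,r,s}_{\mu,\nu}=h^{\wr,-k,r,s}_{\nu,\mu}$ of \cref{rem:dual} together with the symmetry of the construction, or from the observation of \cref{sec:geometry} that at $k=0$ the discrepancy between $\mathcal E_0$ and $\tilde{\mathcal E}_0$ is the energy/charge-zero correction, which is invisible in the charge-zero sector; equivalently, it is the statement that the $k=0$ wall-crossing is trivial, whose detailed verification I would defer to the wall-crossing analysis. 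This completes the plan.
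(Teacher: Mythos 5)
Your proposal is correct and follows essentially the same route as the paper: restrict to a chamber where the coarse VEV-computation tree is fixed, expand the $\varsigma$-labels of \cref{thm:Hzeta} into polynomials in $(\mu,\nu,k)$, and control the $1/\varsigma(z_{[s]})$ pole and the $\prod_i\mu_i\prod_j\nu_j$ prefactor via the commutator structure. The one step you defer to \cite{John,SSZ} is made explicit in the paper's proof: since the total energy vanishes, the final canceling edge of every essential path carries the label $\varsigma(a z_{[s]})$ with $a=\mu_I-\nu_J-tk\neq 0$ on a chamber, and $\varsigma(a z)/\varsigma(z)$ is holomorphic at $0$, so the pole cancels path by path rather than only after summation.
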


\begin{proof} The proof immediately follows from the ones in~\cite{John, SSZ} without any noticeable modification. So we just demonstrate its main steps.

Consider the coarse VEV-computation tree for the triple $(\mu,\nu,k)$ that belongs to a chamber of the hyperplane arrangement. All the terminal vertices of essential paths of the VEV-computation tree are marked with the single argument:
\[
\left(([l(\mu)],[l(\nu)] + s, [s])\right),
\] as it is the only way to get the energy zero operator for the triple in question. Thus the contribution from the VEV in the terminal vertex equals:
\[\frac 1{\varsigma(z_1 + \cdots + z_s)}.\]
The direct parents of the terminal vertices of the essential path correspond to the splitting of all the sets $[l(\mu)],[l(\nu)],[s]$ into two disjoint subsets. The label of the corresponding edge of the coarse VEV-computation tree is :
\[
\mZeta{I}{J + t}{K}{[l(\mu]\setminus I}{[l(\nu)]\setminus J + s - t}{[s]\setminus K}
\]
Due to the fact that $\mu_{[l(\mu)]} + \nu_{l[\nu]} - ks = 0$, the argument of the $\varsigma$-function arising from the commutator is $az_{[s]}$ for some integer $a$. Moreover, as $(\mu,\nu,k)$ belong to the chamber, $a\ne 0.$ As $\frac{\varsigma(az)}{\varsigma(z)}$ is holomorphic in 0, the resulting power series is holomorphic in variables $z_1,\ldots,z_s.$

The fact the resulting expression is also divisible by $\prod \mu_i \prod \nu_j$  follows from the fact that the commutators:
\[
[\alpha_{\mu_p}, \E(I,J+t,K)] = \varsigma(\mu_p z_K) \E(I\cup \{p\}, J+ t, K)
\]
and
\[
[\E(I,J+t,K),\alpha_{-\nu_r}] = \varsigma(\nu_r z_K)\E(I,J\cup\{r\} + t, K).
\]
The coefficients of both appearing $\varsigma-$function are polynomials in $\mu_p$ and $\nu_r$ with vanishing free terms respectively.

We can also notice, that the dependence of $k$ in the resulting expression is also polynomial. However,  as 
\[
k = \frac 1s\left( \mu_{[l(\mu)]} - \nu_{[l(\nu]}\right)
\]
we can also eliminate $k$ from the expressions for VEV completely.

\end{proof}

Now we can return to Remark~\ref{rem:wall} and explain, why we do not include the hyperplane $k = 0$ to the list of walls. Let $k = 0$. Consider the VEV 
\[
\left\langle \alpha_\nu\prod_i \tilde\E_0(z_i) \alpha_{-\mu} \right\rangle
\]
Notice, that it is possible to apply the algorithm in two ways: either treating the operators $\tilde \E_0(z_i)$ similarly to positive energy operators or treating them as negative energy operators. The resulting coarse VEV-computation trees are different. However, Theorem~\ref{PiecewisePolynomialleaky} states that the corresponding VEV is a polynomial (and thus a holomorphic function) in its parameters on both sides of the hyperplane $k= 0$, with the values coinciding on it. By the analytic continuation principle, it means, that the corresponding polynomial is the same on both sides.

We also provide the following analysis of the polynomial structure.

\begin{proposition} \label{PolyChambers}
Let $\mathfrak{c}$ be a chamber of the hyperplane arrangement. Then 
$
\mathfrak{h}|_\mathfrak{c}(\mu,\nu,k)
$
is a polynomial of degree $(r+1)s + 1 - l(\mu) - l(\nu)$ in variables $\mu_i,\nu_j$. Moreover,
$
\mathfrak{h}|_\mathfrak{c}(\mu,\nu,k)
$
is a polynomial in $k$ of degree $(r+1)s + 1$.
 
\begin{proof}
The proof of the analog result for non-leaky completed cycles in \cite{SSZ} implies that:
\begin{equation}
\mathfrak{h}|_\mathfrak{c}(\mu,\nu,k) = \sum_{\ell=0}^g (-1)^\ell P_{\mathfrak{c},\ell}^{\wr, s}(\mu,\nu, k) ,
\end{equation}
where $P_{\mathfrak{c},\ell}^{\wr, s}$ is a homogeneous polynomial in variables $\mu_i,\nu_j$  of degree 
$(r+1)s + 1 - l(\mu) - l(\nu) - 2\ell$ with $P_{\mathfrak{c},\ell}^{\wr,s}(\mu,\nu, k) > 0$ for all $(\mu,\nu) \in \mathfrak{c}$, and 
$g=(rs - l(\mu) - l(\nu) + 2)/2.$
 The degree in $k$ is equal to the degree in $\mu, \nu$, augmented by the amount of $l(\mu) + l(\nu)$ since the division by the factor of $\prod_i \mu_i \prod_j \nu_j$ affects only the $\mu, \nu$ dependency but not the $k$ dependency.
\end{proof}
\end{proposition}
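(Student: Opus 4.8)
The plan is to derive all degree statements from the closed formula for the connected correlator in Theorem~\ref{thm:Hzeta}. On a chamber, Lemma~\ref{lem:chamber} gives $\mathfrak{h}|_{\mathfrak c}=h^{\wr,k,r,s}_{\mu,\nu}=h^{\wr,k,r,s,\circ}_{\mu,\nu}$, so I would work throughout with \eqref{eq:computationHcirc}, where the single terminal factor $1/\varsigma(z_{[s]})$ is independent of $\mu,\nu,k$ and every essential path $P$ has exactly $|L(P)|=l(\mu)+l(\nu)+s-1$ canceling edges: each canceling edge merges two operators, and the connectedness condition $T(P)=1$ forces all $N=l(\mu)+l(\nu)+s$ operators to coalesce into one.

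First I would set up a bigrading by $z$-degree and by total degree in $(\mu,\nu,k)$. Each edge label $\mZeta{I_l}{J_l+t_l}{K_l}{M_l}{N_l+v_l}{R_l}=\varsigma(D_l)$ has argument $D_l=(\mu_{I_l}-\nu_{J_l}-t_lk)z_{R_l}-(\mu_{M_l}-\nu_{N_l}-v_lk)z_{K_l}$, which is bilinear, namely homogeneous of degree $1$ in the $z$'s and of degree $1$ in $(\mu,\nu,k)$ jointly. Since $\varsigma$ is odd, every monomial of $\varsigma(D_l)$ carries equal $z$-degree and $(\mu,\nu,k)$-degree, while $1/\varsigma(z_{[s]})=z_{[s]}^{-1}\sum_{a\ge 0}b_az_{[s]}^{2a}$ contributes $z$-degree $2a-1$ and $(\mu,\nu,k)$-degree $0$. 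Extracting $[z_1^{r+1}\cdots z_s^{r+1}]$ fixes the total $z$-degree to $(r+1)s$, so a surviving monomial has $(\mu,\nu,k)$-degree $(r+1)s+1-2a$; combined with $|L(P)|=l(\mu)+l(\nu)+s-1$ and $a\le g$ this reproduces the decomposition $\mathfrak{h}|_{\mathfrak c}=\sum_{\ell=0}^g(-1)^\ell P_{\mathfrak{c},\ell}^{\wr,s}$ of~\cite{SSZ} into homogeneous pieces of controlled degree.

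For the degree in $\mu,\nu$, I would use that the numerator $\prod_i\mu_i\prod_j\nu_j\cdot\mathfrak{h}|_{\mathfrak c}$ is divisible by $\prod_i\mu_i\prod_j\nu_j$ exactly as in the proof of Theorem~\ref{PiecewisePolynomialleaky}: along every essential path each operator $\alpha_{\mu_i}=\E(\{i\},\emptyset+0,\emptyset)$ (resp. $\alpha_{-\nu_j}$) is absorbed through a single structure constant $\varsigma(\mu_iz_K)$ (resp. $\varsigma(\nu_jz_K)$) whose leading term is divisible by $\mu_i$ (resp. $\nu_j$). After dividing, the top homogeneous component ($a=\ell=0$) has degree $(r+1)s+1-l(\mu)-l(\nu)$, and its non-vanishing is precisely the positivity $P_{\mathfrak{c},0}^{\wr,s}>0$ imported from~\cite{SSZ} (which also governs the specialization $k=0$ matching their non-leaky computation). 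This yields the first assertion.

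The hard part will be the degree in $k$. Since division by $\prod_i\mu_i\prod_j\nu_j$ is $k$-free, the degree in $k$ of $\mathfrak{h}|_{\mathfrak c}$ equals that of its numerator, so the whole point is to locate the top power of $k$ there. The subtlety I expect to be the main obstacle is that one must track \emph{which} factors along an essential path actually carry $k$: the $l(\mu)+l(\nu)$ absorption constants $\varsigma(\mu_iz_K),\varsigma(\nu_jz_K)$ are manifestly $k$-free, their defining matrices having a zero column from the vanishing $z$-argument of the $\alpha$-operators, so that all $k$-dependence is concentrated in the remaining $s-1$ merges together with the higher-order corrections of the $\varsigma$-expansions. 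Pinning down the exact top power of $k$ and proving that its coefficient does not cancel after summing over essential paths and extracting $[z_1^{r+1}\cdots z_s^{r+1}]$ is where care is needed; in particular the tempting heuristic that ``$k$ enters symmetrically with $\mu,\nu$'' must be weighed against these forced $k$-free factors, and I would verify the claimed exponent $(r+1)s+1$ on small cases such as $r=1,\ s=2,\ l(\mu)=l(\nu)=1$ before carrying out the leading-coefficient argument.
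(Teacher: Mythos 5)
Your treatment of the first assertion is correct and is essentially the paper's route: the paper simply imports the homogeneous decomposition $\sum_{\ell}(-1)^\ell P^{\wr,s}_{\mathfrak c,\ell}$ from \cite{SSZ}, while you rederive it from Theorem~\ref{thm:Hzeta} via the bigrading. The count $|L(P)|=l(\mu)+l(\nu)+s-1$, the fact that each edge label $\varsigma(D_l)$ contributes equal $z$-degree and $(\mu,\nu,k)$-degree while $1/\varsigma(z_{[s]})$ contributes $z$-degree $2a-1$ and no $(\mu,\nu,k)$-degree, and the divisibility by $\prod_i\mu_i\prod_j\nu_j$ together give the degree $(r+1)s+1-l(\mu)-l(\nu)$ in $\mu,\nu$, with the non-vanishing of the top piece imported from \cite{SSZ}. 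This part is fine.

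The genuine gap is the second assertion, which you leave open --- but you are right to hesitate, because your own observation, pushed one step further, \emph{refutes} the stated exponent rather than confirming it. Each of the $l(\mu)+l(\nu)$ edges on which a bare $\alpha_{\mu_i}$ or $\alpha_{-\nu_j}$ is first absorbed carries a label $\varsigma(\pm\mu_i z_L)$ or $\varsigma(\pm\nu_j z_L)$ that is $k$-free \emph{and} has $z$-degree at least one; since the total $z$-degree of $\prod_{l}\varsigma(D_l)$ in any surviving monomial is $(r+1)s+1-2a$, and the $k$-degree of each factor is bounded by its $z$-degree, the $k$-degree of the numerator is at most $(r+1)s+1-l(\mu)-l(\nu)$. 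Division by $\prod_i\mu_i\prod_j\nu_j$ is $k$-free, so the same bound holds for $\mathfrak h|_{\mathfrak c}$: the degree in $k$ is at most equal to the degree in $\mu,\nu$, not $(r+1)s+1$. The small case you propose to test confirms this: Proposition~\ref{prop:onepartexpl} gives $h^{\wr,k,r=1,s=m-1,\circ}_{d,\nu}$ proportional to $(2d-k)(2d-2k)\cdots(2d-(m-2)k)$, of degree $m-2=(r+1)s+1-l(\mu)-l(\nu)$ in $k$, whereas $(r+1)s+1=2m-1$. The paper's one-line justification (``division by $\prod_i\mu_i\prod_j\nu_j$ affects only the $\mu,\nu$ dependency'') tacitly assumes the numerator attains $k$-degree equal to its total degree $(r+1)s+1$, which is exactly what the forced $k$-free absorption factors rule out. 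So the correct completion of your argument is the bound $\deg_k\mathfrak h|_{\mathfrak c}\le (r+1)s+1-l(\mu)-l(\nu)$ (with sharpness to be checked by a leading-coefficient computation, e.g.\ on the one-part family), not a proof of the exponent as stated.
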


\vspace{1cm}
\section{Wall crossing formulae}
\label{sec:wallcrossing}

In this section we obtain the wall crossing formula for leaky double Hurwitz numbers with completed cycles, that is, we compute the quantity:
$$
WC^{\wr, r}_{I,J,t} \coloneqq \mathfrak{h}|_{\mathfrak{c}_1} - \mathfrak{h}|_{\mathfrak{c}_2}, \qquad \qquad I \subset [m], J \subset [n], t = 0, \dots, s;
$$
where $\mathfrak{c}_1$ and $\mathfrak{c}_2$ are the two chambers bordering the wall:
$$
V \supset W_{I,J,t} : |\mu_I| - |\nu_J| - tk = 0
$$ 
of the hyperplane arrangement, and $I,J,t$ will be fixed throughout the section. One nice feature of Hurwitz wall crossing formulae is that they are usually expressed in terms of (sums of) quadratic products of Hurwitz numbers themselves, and the leaky case makes no exception. Similarly to the rest of the paper, this result generalizes the wall crossing formula obtained in \cite{SSZ} in the $k=0$ case, which itself generalizes the original wall crossing formula for double Hurwitz numbers obtained in \cite{CJM11}. Also in this case, the proof itself follows the same strategy with minor adaptations reflecting the different definitions.

To start, let us define the key quantity:
\begin{equation}
\delta \coloneqq |\mu_I| - |\nu_J| - tk.
\end{equation}
Notice that $\delta$ can be thought as a function on $V$ for fixed $t$, that the wall described precisely the zero locus of the function $\delta$ and that the sign of $\delta$ determines on which side of the wall a point $(\vec{x}, \vec{y},k)$ is located. In other words, $\delta$ is positive if evaluated on $\mathfrak{c}_1$, negative if evaluated on $\mathfrak{c}_2$, and zero if and only if evaluated on the wall $W_{I,J,t}$.

Following~\cite{SSZ} we define the series:
\begin{equation}\label{WallCrossingSeries}
H_{\mu,\nu,k}^{\wr,s}(z_1,\dots,z_s) := \frac{1}{\prod_{i=1}^{n} \mu_i \prod_{j=1}^{m} \nu_j} \left \langle \prod_{i=1}^{n} \mathcal{E}_{\mu_i}(0) \prod_{p=1}^s \mathcal{E}_{-k}(z_p)\prod_{i=1}^{m} \mathcal{E}_{-\nu_j}(0) \right\rangle.
\end{equation}
The expansion coefficients of this series are polynomial in any chamber.

We need the following two specializations in terms of $\delta$: the one with $\delta$ as one of the parts:
\begin{align}\label{eq:Hdelta}
H^{\wr,|K|}_{\mu_I,\nu_J + \delta}(\{z_p\}_{p \in K}) 
=
\frac{1}{\delta \prod_{i \in I} \mu_i \prod_{j \in J} \nu_j} 
\left\langle \prod_{i \in I}\mathcal{E}(\{i\},\emptyset+0,\emptyset)\prod_{p \in K} \mathcal{E}(\emptyset,\emptyset+1,\{p\})\right.\\ \left. \prod_{j \in J}\mathcal{E}(\emptyset,\{j\}+0,\emptyset) \mathcal{E}_{-\delta}(0)\right\rangle,
\end{align}
and its complement:

\begin{align}\label{eq:Hdeltac}
H^{\wr,|K^c|}_{\mu_{I^\text{c} + \delta},\nu_{J^\text{c}}}(\{z_p\}_{k \in K^\text{c}}) 
=
\frac{1}{\delta \prod_{i \notin I} \mu_i \prod_{j \notin J} \nu_j} 
\left \langle \mathcal{E}_\delta(0)\ \prod_{i \notin I}\mathcal{E}(\{i\},\emptyset + 1,\emptyset)\prod_{p \notin K} \mathcal{E}(\emptyset,\emptyset+1,\{p\})\right. \\ \left.\prod_{j \notin J}\mathcal{E}(\emptyset,\{j\}+0,\emptyset) \right\rangle.
\end{align}

\begin{theorem}
The wall crossing formula is given by:
\begin{align}\label{eq:wc}
WC_{I,J,t}^{\wr, r}(\mu,\nu) = 
 [z_1^{r+1} \cdots z_s^{r+1}] & \sum_{\begin{smallmatrix}K \subset [s]\\ |K| = t \end{smallmatrix}} \delta^2 \frac{\varsigma(z_K)\varsigma(z_{K^c})\varsigma(\delta z_{[s]})}{\varsigma(\delta z_K) \varsigma(\delta z_{K^c})\varsigma(z_{[s]})} H_{\mu_I, \nu_J + \delta}^{\wr,|K|}(\{z_p\}_{p \in K}) H_{\mu_{I^c} + \delta, \nu_{J^c}}^{\wr,|K^c|}(\{z_p\}_{k \in K^c}). 
\end{align}
where
\begin{equation}
\delta = \mu_I - \nu_J - t\cdot k.
\end{equation}
\end{theorem}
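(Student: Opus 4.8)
The plan is to follow the strategy of Cavalieri--Johnson--Markwig \cite{CJM11} in the form adapted by Shadrin--Spitz--Zvonkine \cite{SSZ}, transported to the leaky $\E$-operator setting via \cref{prop:structureconstants}. First I would reduce to connected numbers: by \cref{lem:chamber}, inside each of the two chambers $\mathfrak{c}_1,\mathfrak{c}_2$ the possibly disconnected and the connected numbers coincide, so it suffices to compute the jump of the generating series $H^{\wr,s}_{\mu,\nu,k}$ of \eqref{WallCrossingSeries} across $W_{I,J,t}$ and to extract $[z_1^{r+1}\cdots z_s^{r+1}]$ only at the very end. The key preliminary observation is that the coarse VEV-computation tree depends on $(\mu,\nu,k)$ only through the signs of the energies $\mu_A-\nu_B-ku$ of the operators $\E(A,B+u,C)$ produced by the algorithm, and that the linear form $\mu_A-\nu_B-ku$ agrees with $\delta=\mu_I-\nu_J-tk$ as a function on $V$ precisely when $A=I$, $B=J$ and $u=t$. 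Since every operator arising in the algorithm satisfies $u=|C|$ (the initial data have $(u,|C|)\in\{(0,0),(1,1)\}$ and commutators add both), the only operators whose energy changes sign across $W_{I,J,t}$ are the composites $\E_\delta(z_K)=\E(I,J+t,K)$ with $K\subset[s]$, $|K|=t$.

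The heart of the argument is to show that the difference $H|_{\mathfrak{c}_1}-H|_{\mathfrak{c}_2}$ localizes, for each such $K$, to a single commutation step. Once the two sub-computations have run to completion, producing $\E_\delta(z_K)$ and the complementary composite $\E_{-\delta}(z_{K^c})$ built from all remaining operators, the two chambers combine these with opposite energy sign, so the computations differ by exactly one transposition of the two composites. By \eqref{eq:commE} and \cref{prop:structureconstants} this transposition produces
\[
\left[\E_\delta(z_K),\E_{-\delta}(z_{K^c})\right]=\varsigma\!\left(\det\begin{pmatrix}\delta & z_K\\ -\delta & z_{K^c}\end{pmatrix}\right)\E_0(z_{[s]})=\varsigma(\delta z_{[s]})\,\E_0(z_{[s]}),
\]
whose vacuum cap contributes $\varsigma(\delta z_{[s]})/\varsigma(z_{[s]})$. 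Summing the resulting disconnected terms over all $K\subset[s]$ with $|K|=t$ reproduces the outer sum of \eqref{eq:wc}.

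Then I would recognize the two sides of the bridge as Hurwitz generating series. The sub-computation producing $\E_\delta(z_K)$ involves exactly the initial operators indexed by $i\in I$, $j\in J$ and $p\in K$; capping it by the bridging operator of energy $-\delta$ turns it into the VEV defining $H^{\wr,|K|}_{\mu_I,\nu_J+\delta}(\{z_p\}_{p\in K})$ in \eqref{eq:Hdelta}, where the composite energy $\delta$ is read as an extra part of $\nu$. Symmetrically the complementary sub-computation yields $H^{\wr,|K^c|}_{\mu_{I^c}+\delta,\nu_{J^c}}(\{z_p\}_{p\in K^c})$ of \eqref{eq:Hdeltac}, with $\delta$ now an extra part of $\mu$. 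Assembling the prefactor is then bookkeeping: the two factors $1/\delta$ in \eqref{eq:Hdelta}--\eqref{eq:Hdeltac} are cancelled by $\delta^2$, the bridge supplies $\varsigma(\delta z_{[s]})/\varsigma(z_{[s]})$ as above, and converting the naturally occurring composites $\E_{\pm\delta}(z_K),\E_{\mp\delta}(z_{K^c})$ (arguments $z_K$, $z_{K^c}$) into the capping operators $\E_{\mp\delta}(0)$ of \eqref{eq:Hdelta}--\eqref{eq:Hdeltac} (argument $0$) accounts for the remaining ratio $\varsigma(z_K)\varsigma(z_{K^c})/\bigl(\varsigma(\delta z_K)\varsigma(\delta z_{K^c})\bigr)$. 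Collecting these factors yields exactly the kernel of \eqref{eq:wc}.

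The main obstacle I expect is the localization claim in the second step, namely that all contributions other than the single bridging commutator cancel between the two chambers. Both chamber polynomials are genuine polynomials on all of $V$ by \cref{PiecewisePolynomialleaky}, so their difference is again polynomial and it suffices to match it with the right-hand side of \eqref{eq:wc} after analytic continuation; but one must argue carefully that the nested commutators occurring deeper in the two coarse VEV-computation trees agree termwise once the sign of $\delta$ is the only datum that differs, so that their contributions to $H|_{\mathfrak{c}_1}$ and $H|_{\mathfrak{c}_2}$ cancel in pairs and leave only the terms in which $\E_\delta(z_K)$ is transposed with $\E_{-\delta}(z_{K^c})$. This is precisely where the argument of \cite{SSZ} is invoked; the only adaptation is that the scalar structure constants are now the leaky determinants $\varsigma(\det(\ldots))$ of \cref{prop:structureconstants} rather than their $k=0$ specializations, which does not affect the combinatorial cancellation.
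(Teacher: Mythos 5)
Your proposal follows essentially the same route as the paper's proof: localize the difference of the two chamber polynomials to the essential paths that produce the composite operator $\E(I,J+t,K)$, split each such path into a chamber-independent prefix and a chamber-dependent bridging commutator $[\E_\delta(z_K),\E_{-\delta}(z_{K^c})]$ contributing $\varsigma(\delta z_{[s]})/\varsigma(z_{[s]})$, identify the two halves with the capped VEVs \eqref{eq:Hdelta}--\eqref{eq:Hdeltac}, and account for the remaining $\varsigma$-ratios by trading the composites $\E_{\pm\delta}(z_K)$, $\E_{\mp\delta}(z_{K^c})$ for the capping operators $\E_{\mp\delta}(0)$. The only organizational difference is that the paper first reduces to $k<0$ (so all intermediate operators have positive energy) and handles $k>0$ via the duality of Remark~\ref{rem:dual}, whereas you argue uniformly; your explicit justification that $|K|=t$ is forced (additivity of $t$ and $|K|$ under commutators) is a point the paper leaves implicit.
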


\begin{proof} 
Here, we consider the case $k<0$ so that all intermediate operators $\E(\emptyset,\emptyset + 1,\{p\})$ have positive energy. The case $k>0$ can be treated using Remark~\ref{rem:dual} then.

Let $EP(\mathfrak{c}_i)$ the set of essential paths within the chamber $i \in \{1,2\}$. Since we are interested in the quantity:
$$
WC_{I,J,t}^{\wr, r}(\mu,\nu) = \mathfrak{h}|_{\mathfrak{c}_1} - \mathfrak{h}|_{\mathfrak{c}_2},
$$
it suffices to track down those essential paths that are produced in ${\mathfrak{c}_1}$ and not in ${\mathfrak{c}_2}$, and vice versa. For this purpose let us observe that an essential path $P \in EP(\mathfrak{c}_1)$ belongs to $EP(\mathfrak{c}_2)$ as well (and vice versa) unless it produces at some point the operator:
$$
\mathcal{E}(I,J+t,K), \qquad \qquad \text{ for some } K \subset [s],
$$
which is by definition of the wall is the only operator that differs in energy sign between the two chambers. If $P$ is a pattern producing the operator $
\mathcal{E}(I,J+t,K)$, we want to start the commuting algorithm by rearranging slightly the operators of the VEV to better fit the considered set $I,J,K$ as:
\begin{align}
\left\langle \prod_{i \notin I} \mathcal{E}(\{i\},\emptyset + 0,\emptyset)  \prod_{i \in I} \mathcal{E}(\{i\}, \emptyset + 0,\emptyset) \prod_{p=1}^s \mathcal{E}(\emptyset, \emptyset+1, \{p\})\right.\\ \left.  \prod_{j \in J}\mathcal{E}(\emptyset,\{j\} + 0,\emptyset)\prod_{j \notin J}\mathcal{E}(\emptyset,\{j\} + 0,\emptyset) \right\rangle. 
\end{align}
This can be done without loss of generality since by definition the operators $\mathcal{E}(\{i\},\emptyset+0,\emptyset) = \alpha_{\mu_i}$ with $\mu_i > 0$ and $[\alpha_{i}, \alpha_{j}]  = i\delta_{i+j}$, so that all commutators $\mathcal{E}(\{i\},\emptyset +0,\emptyset)$ commute by themselves and so do the $\mathcal{E}(\emptyset,\{j\}+0,\emptyset)$.

Any essential path of the coarse VEV-computation tree that involves $\mathcal{E}(I,J+t,K)$ should contain canceling terms that produce the unions of subsets of $I,J,K$.

Let us now consider the following two quantities.
\begin{itemize}
\item Let $T_1^K$ be the product of $\varsigma$-functions produced by the algorithm up until the appearance of $\E(I,J+t,K)$. Note that it does not depend on whether we run the algorithm on $\mathfrak{c}_1$ or~$\mathfrak{c}_2$.
\item Let $T_2^K$ denote the difference between the contributions of essential paths containing $\E(I,J+t,K)$ on $\mathfrak{c}_1$ and~$\mathfrak{c}_2$.
\end{itemize}

We can express the wall-crossing formula in terms of these quantities as
\begin{equation}
WC_{I,J,t}^{\wr, r}(\mu, \nu) = \frac{1}{\prod_{i=1}^m \mu_i \prod_{j=1}^n \nu_j} [z_1^{r+1} \cdots z_s^{r+1}] \sum_{K \subset [s]} T_1^K T_2^K .
\end{equation}
We are thus left with computing $T_1^K$ and $T_2^K$.

\subsubsection{Computation of $T_1^K$}

 It is easy to see that the corresponding product of $\varsigma$-functions is given by: 
\begin{equation}
T_1^K(\{z_p\}_{p \in K}) := \left\langle \prod_{i \in I}\mathcal{E}(\{i\},\emptyset + 0 ,\emptyset)\prod_{p \in K} \mathcal{E}(\emptyset,\emptyset+1,\{p\}) \prod_{j \in J}\mathcal{E}(\emptyset,\{j\}+0,\emptyset) \mathcal{E}_{-\delta}(0)\right\rangle \frac{\varsigma(z_K)}{\varsigma(\delta z_K)} . 
\end{equation}
Therefore,
\begin{equation}\label{eq:t1}
T_1^K(\{z_k\}_{k \in K}) = \delta \prod_{i \in I} \mu_i \prod_{j \in J} \nu_j \frac{\varsigma(z_K)}{\varsigma(\delta z_K)} H_{\mu_I,\nu_J + \delta}(\{z_k\}_{k \in K}).
\end{equation}

\subsubsection{Computation of $T_2^K$}

 To compute $T_2$, it will be better first to 
  move the operator $\mathcal{E}(I, J+t, K)$ to the left on both chambers, even though it has positive energy on~$\mathfrak{c}_1$.  After it is moved to the far left, resume the normal operation of the algorithm.
 
Notice, that all the canceling terms that appear in the process of this movement of the operators, and in the subsequent operation of the algorithm have the same energy signs both in $\mathfrak c_1$ and $\mathfrak c_2$, so the corresponding paths do not contribute to $T_2^K$. The only contribution to $T_2^K$ comes from paths with $\mathcal{E}(I, J+t, K)$ is moved entirely from to left, and the remaining operators are all to the right of it. The resulting paths will then contribute $0$ on~$\mathfrak{c}_2$ since there is an operator of negative energy on the far left, but it will be non-zero on~$\mathfrak{c}_1$. The last step in the algorithm on $\mathfrak{c}_1$ can be computed by noticing that the total energy of the two operators involved should be zero, resulting in:
\begin{equation}
\langle \mathcal{E}(I, J+t, K)\ \mathcal{E}(I^\text{c}, J^\text{c}+(s - t), K^\text{c})\rangle = \frac{\varsigma(\delta z_{[s]})}{\varsigma(z_{[s]})} ,
\end{equation}
where $I^\text{c}$ denotes the complement of $I \subset [m]$, and the same for $J^\text{c}$ and~$K^\text{c}$. Also using that
\begin{equation}
\langle \mathcal{E}_\delta(0)\ \mathcal{E}(I^\text{c}, J^\text{c} + (s-t), K^\text{c}) \rangle = \frac{\varsigma(\delta z_{K^\text{c}})}{\varsigma(z_{K^\text{c}})},
\end{equation}
we see that 
\begin{align}
& T_2^K(\{z_k\}_{p \notin K}) \\ \notag 
&= \langle\mathcal{E}_\delta(0) \prod_{i \notin I}\mathcal{E}(\{i\},\emptyset +0,\emptyset)\prod_{p \notin K}\mathcal{E}(\emptyset, \emptyset+1, \{p\})\prod_{j \notin J} \mathcal{E}(\emptyset, \{j\}+0, \emptyset)\rangle \frac{\varsigma(z_{K^\text{c}}) \varsigma(\delta z_{[s]})}{\varsigma(\delta z_{K^\text{c}})\varsigma(z_{[s]})} .
\end{align}

therefore
\begin{align}\label{eq:t2}
& T_2^K(\{z_k\}_{k \notin K}) \\ \notag
& = \delta \prod_{i \notin I} \mu_i \prod_{j \notin J} \nu_j 
\varsigma(z_{K^\text{c}}) \varsigma(\delta z_{[s]})\varsigma(\delta z_{K^\text{c}})\varsigma(z_{[s]})
H_{\mu_{I^\text{c}} + \delta, \nu_{J^\text{c}}}(\{z_k\}_{k \in K^\text{c}}) .
\end{align}

Substituting the obtained formulae for $T_1^K$ and $T_2^K$ we get the desired result. This concludes the proof of the theorem.
\end{proof}

\subsection{Wall crossing formulae in genus zero}

In the following we specialize the wall crossing formulae down to the $r=1$ case and the $g=0$ case, to recover and compare with the wall crossing formulae for the non-completed $k$-leaky Hurwitz numbers considered in \cite{CMS}.

Recall that in genus zero only the leading term of the $\varsigma$-functions contribute and that furthermore by Riemann-Hurwitz formula we must have $s = n + m - 2$. Moreover, $|K|$ is forced to be equal to $|I| + |J| - 1$, for otherwise, the Hurwitz number considered vanishes. Under these constraints the quantity:

\begin{align}
WC_{I,J,t}^{\wr,k, r}(\mu,\nu) = 
 [z_1^{r+1} \cdots z_s^{r+1}] & \sum_{K \subset [s]} \delta^2 \frac{\varsigma(z_K)\varsigma(z_{K^c})\varsigma(\delta z_{[s]})}{\varsigma(\delta z_K) \varsigma(\delta z_{K^c})\varsigma(z_{[s]})} H_{\mu_I, \nu_J + \delta}^{\wr,|K|}(\{z_p\}_{p \in K}) H_{\mu_{I^c} + \delta, \nu_{J^c}}^{\wr,|K^c|}(\{z_p\}_{k \in K^c}),
\end{align}
specializes to:

\begin{align}
WC_{I,J,t}^{\wr,k, r}(\mu,\nu) &= [z_1^{2} \cdots z_s^{2}] \sum_{|K| = |I| + |J| - 1} \delta^2 \frac{z_K z_{K^c} \delta z_{[s]}}{\delta z_K \delta z_{K^c} z_{[s]}} H_{\mu_I, \nu_J + \delta}^{\wr,|I| + |J| - 1}(\{z_p\}_{p \in K}) H_{\mu_{I^c} + \delta, \nu_{J^c}}^{\wr,|I^c| + |J^c| - 1}(\{z_p\}_{k \in K^c})
\\
&= [z_1^{2} \cdots z_s^{2}] \sum_{|K| = |I| + |J| - 1} \delta \cdot H_{\mu_I, \nu_J + \delta}^{\wr,|I| + |J| - 1}(\{z_p\}_{p \in K}) H_{\mu_{I^c} + \delta, \nu_{J^c}}^{\wr,|I^c| + |J^c| - 1}(\{z_p\}_{k \in K^c})
\\
&= \binom{n+m - 2}{|I| + |J| - 1} \delta \cdot h^{\wr,k,r=1, |I| + |J| - 1}_{\mu_I, \nu_J + \delta} h^{\wr,k,r=1, |I^c| + |J^c| - 1}_{\mu_{I^c} + \delta, \nu_{J^c}};
\end{align}
where the last step follows from the fact that there are $\binom{n+m - 2}{|I| + |J| - 1}$ ways to choose a subset of $|I| + |J| - 1$ elements from one of $n+m-2$ elements and that all coefficients of $[z_1^{2} \cdots z_s^{2}]$ do not depend on the subset $K$ but only on its cardinality $|K|$, which is the same for all subsets considered. The last step of the computation coincides with the formula established in \cite{CMS}.

\subsection{One-part leaky Hurwitz numbers} The remaining part of the section is dedicated to the one-part case. We consider the case $k>0$. 

\begin{proposition}\label{prop:onepart} For $n=1$ we have:
\begin{equation}
h_{d,\nu}^{\wr,k,r,s,\circ} = \frac{1}{d \prod_{j} \nu_j}[z_1^{r+1}\cdots z_s^{r+1}] 
\frac{1}{\varsigma(z_{[s]})} 
\prod_{p=1}^s \varsigma\left((d - pk)z_p + kz_{[p]}\right)
\prod_{j=1}^m \varsigma\left(z_{[s]} \nu_j \right).
\end{equation}
\begin{proof}
Direct computation from equation \ref{eq:computationHcirc}. There is a single positive energy operator in the entire VEV, so starting commuting it to the right all passing terms vanish with only the canceling term surviving, up until the last commutator that gives the result. So there's a single essential path contributing to the result.
\end{proof}
\end{proposition}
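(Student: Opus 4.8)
The statement is a clean specialization of the master formula \eqref{eq:computationHcirc} to the one-part case $n = 1$ (i.e. $\mu = (d)$), and the claim is that the coarse VEV-computation tree collapses to a single essential path. The plan is to exploit the fact that in the connected correlator we must have $T(P) = 1$, and that with $n = 1$ there is exactly one positive-energy operator among the $\alpha$-type operators, namely $\mathcal{E}(\{1\}, \emptyset + 0, \emptyset) = \alpha_d$, which forces the branching structure to degenerate.

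\textbf{Key steps.} First I would write out the connected VEV to be computed from equation \eqref{eq:computationHcirc}, which for $n = 1$ reads
\begin{equation*}
\left\langle \alpha_d \prod_{p=1}^s \mathcal{E}_{-k}(z_p) \prod_{j=1}^m \alpha_{-\nu_j}\right\rangle^\circ,
\end{equation*}
with the single positive-energy operator $\alpha_d$ sitting on the far left. Working in the regime $k > 0$ as stated, all the intermediate operators $\mathcal{E}_{-k}(z_p)$ have negative energy, as do the $\alpha_{-\nu_j}$. Following step \emph{viii)} of the VEV algorithm, I would commute the unique positive-energy operator $\alpha_d$ rightward through the chain. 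Because $\alpha_d$ is the only operator of positive energy, every \emph{passing term} produces a VEV whose leftmost operator again has negative energy, and hence vanishes by fact \emph{i)}. Thus at each step only the \emph{canceling term} survives, and the tree is a single path — establishing the ``single essential path'' claim stated in the proof sketch.

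\textbf{Carrying out the commutators.} The main computation is then to track the product of $\varsigma$-factors accumulated along this path. Using Proposition~\ref{prop:structureconstants} (equivalently the basic commutators $[\alpha_d, \mathcal{E}_{-k}(z)] = \varsigma(dz)\,\mathcal{E}_{d-k}(z)$ and more generally the determinant formula \eqref{eq:zz}), I would commute $\alpha_d$ successively past $\mathcal{E}_{-k}(z_1), \dots, \mathcal{E}_{-k}(z_s)$. At the step where the accumulated operator of energy $d - pk$ meets $\mathcal{E}_{-k}(z_{p+1})$ carrying variable $z_{p+1}$ and the partial sum $z_{[p]}$ already absorbed, the relevant determinant in \eqref{eq:zz} evaluates to $(d - pk) z_{p+1} + k z_{[p+1]}$ (up to the bookkeeping of which variables have been merged), yielding the factor $\varsigma\!\left((d - pk)z_p + kz_{[p]}\right)$ appearing in the statement. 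I would then commute the resulting positive-energy operator past each $\alpha_{-\nu_j}$, each contributing $\varsigma(z_{[s]}\nu_j)$ since by that stage all the $z_p$ have merged into $z_{[s]}$. The terminal energy-zero operator is $\mathcal{E}_0(z_{[s]})$, contributing the factor $1/\varsigma(z_{[s]})$ by the evaluation of $\langle \mathcal{E}_0(z_S)\rangle$ recorded before Theorem~\ref{thm:Hzeta}; combined with the prefactor $1/(d\prod_j \nu_j)$ this gives exactly the claimed expression.

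\textbf{Main obstacle.} The genuinely delicate point is the precise form of the argument $(d - pk)z_p + kz_{[p]}$ of the $\varsigma$-function: it requires correctly identifying, at the $p$-th commutation, both the energy $d - pk$ of the running operator and which $z$-variables have already been summed into $z_{[p]}$ versus the fresh variable $z_p$ being absorbed. This is pure bookkeeping of the determinant \eqref{eq:zz} along the path, but it is the step where a sign or an index shift is easy to misplace; everything else follows mechanically from the vanishing of all passing terms.
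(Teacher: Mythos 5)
Your proposal is correct and follows exactly the same route as the paper's (much terser) proof: the unique positive-energy operator $\alpha_d$ is commuted rightward, every passing term dies by fact \emph{i)}, and the single surviving essential path accumulates precisely the stated $\varsigma$-factors. The one point worth tightening is the intermediate determinant: when $\mathcal{E}_{d-(p-1)k}(z_{[p-1]})$ meets $\mathcal{E}_{-k}(z_p)$ the determinant is $(d-(p-1)k)z_p + k z_{[p-1]}$, which indeed equals the claimed $(d-pk)z_p + k z_{[p]}$, so your flagged bookkeeping concern resolves in favour of the statement.
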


\begin{remark}
We stress here that the case $m=1$ is more involved than the $n=1$ case, in contrast with the nonleaky case in which there's a perfect symmetry between the parts of the two side partitions $\mu$ and $\nu$.
\end{remark}

We now compare a very particular case of our formula with the expression appearing in \cite{CMS}[pag 11]. To establish a match between the formulae, we need to specialize the formulae above to the following three conditions: \textit{generic ramifications}, \textit{genus zero}, and $1$-leaky Hurwitz numbers.
The first specialization amounts to set $r=1$, the second to set $s=m-1$, and the third to $k=1$. Note that the leaky $2$-completed cycles differ by a correction factor discussed in section \ref{sec:geometry}, although this correction does not show up when at least one of the following conditions is verified: the genus is equal to zero or the leaky parameter is $k=1$. Since both of them are verified, a fortiori we can forget about this correction for the sake of the comparison. The formula appearing in \cite{CMS} reads

\begin{equation}\label{eq:CMSHurwitz}
h_{d,\nu}^{\wr,1,r=1, m-1, \circ} = \frac{(m-1)!}{2^{m-2}} (2d - 1)(2d - 2)\dots (2d - (m-2)).    
\end{equation}

\begin{proposition} \label{prop:onepartexpl} For $n=1$, $r=1$ and $s=m-1$ the $k$-leaky one part $1$-completed (same as uncompleted in this case) Hurwitz numbers read:
\begin{equation}
h_{d,\nu}^{\wr,k,r=1, s=m-1, \circ} = \frac{(m-1)!}{2^{m-2}}
\Big( 2d - k \Big)
\Big( 2d - 2k \Big)
\cdots
\Big( 2d - (m-3)k \Big)
\Big( 2d - (m-2)k \Big).
\end{equation}

\begin{proof}
The first step consists in extracting the leading term from the $\varsigma$-functions, which is a special consequence of the genus zero constraint. Surprisingly, this extraction simplifies the $\nu_j$ parts against the denominator and the resulting formula becomes completely independent on the partition $\nu$ but for its side (i.e. the degree $d$) and its length, which by Riemann Hurwitz are purely combinatorial data.
\begin{align}
h_{d,\nu,k}^{\wr,r=1,s=m-1, \circ} &= \frac{1}{d \prod_{j} \nu_j}[z_1^2\cdots z_{m-1}^2] 
\frac{1}{\varsigma(z_{[m-1]})} 
\prod_{p=1}^{m-1} \varsigma\left((d - pk)z_p + kz_{[p]}\right)
\prod_{j=1}^m \varsigma\left(z_{[m-1]} \nu_j \right)
\\
&= \frac{1}{d}[z_1^2\cdots z_{m-1}^2] 
\prod_{p=1}^{m-1}\left((d - pk)z_p + kz_{[p]}\right)
\left(z_{[m-1]}\right)^{m-1}.
\end{align}
 To extract the right monomial from the above expression, let us define the quantity:
\begin{equation}
    H(m-1) \coloneqq \frac{1}{d}[z_1^2\cdots z_{m-1}^2] 
\prod_{p=1}^{m-1}\left((d - pk)z_p + kz_{[p]}\right)
\left(z_{[m-1]}\right)^{m-1}.
\end{equation}
By differentiating twice with respect to $z_{m-1}$ one obtains the recursion :
$$
H(m-1) = \frac{m-1}{2}\left( 2d - (m-2)k \right)H(m-2),
$$
as well as the initial condition :
$$
H(1) = 1.
$$
Iterating the recursion $m-2$ times and substituting the initial condition yields to the result.
\end{proof}
\end{proposition}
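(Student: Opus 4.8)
The plan is to start from the closed form for connected one-part numbers in Proposition~\ref{prop:onepart}, impose the three specializations $r=1$, $s=m-1$ and genus zero, and then reduce the coefficient extraction to a one-step recursion that telescopes into the claimed product.

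First I would specialize Proposition~\ref{prop:onepart} to $r=1$ and $s=m-1$, so that the coefficient operator is $[z_1^2\cdots z_{m-1}^2]$. The crucial point is that this is exactly the genus-zero situation, in which only the leading (linear) term of each $\varsigma$-factor survives. Indeed, with $\varsigma(z)=z+O(z^3)$ (equivalently $\mathcal{S}(z)\to 1$), a degree count shows that the $m-1$ factors $\varsigma((d-pk)z_p+kz_{[p]})$, the $m$ factors $\varsigma(z_{[m-1]}\nu_j)$, and the pole $1/\varsigma(z_{[m-1]})$ have minimal total $z$-degree $(m-1)+m-1=2(m-1)$, matching the degree selected by $[z_1^2\cdots z_{m-1}^2]$; hence every higher term is killed and I may replace each $\varsigma(x)$ by $x$. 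After this, $\prod_{j=1}^m z_{[m-1]}\nu_j=(z_{[m-1]})^m\prod_j\nu_j$ cancels the prefactor $1/(d\prod_j\nu_j)$ against $1/\varsigma(z_{[m-1]})\to 1/z_{[m-1]}$, leaving the $\nu$-independent quantity
\begin{equation}
H(m-1)=\frac{1}{d}[z_1^2\cdots z_{m-1}^2]\prod_{p=1}^{m-1}\bigl((d-pk)z_p+kz_{[p]}\bigr)\,(z_{[m-1]})^{m-1}.
\end{equation}

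Next I would set up the recursion. Rewriting the $p$-th factor as $(d-pk)z_p+kz_{[p]}=(d-(p-1)k)z_p+kz_{[p-1]}$ exposes a self-similar structure in which only the last factor and the power $(z_{[m-1]})^{m-1}=(z_{[m-2]}+z_{m-1})^{m-1}$ depend on $z_{m-1}$. Writing $w=z_{[m-2]}$, $u=z_{m-1}$ and $A=d-(m-2)k$, I would extract $[u^2]$ from $(Au+kw)(w+u)^{m-1}$ by the binomial theorem, collecting the $i=1$ term of $Au(w+u)^{m-1}$ and the $i=2$ term of $kw(w+u)^{m-1}$:
\begin{equation}
[u^2]\bigl(Au+kw\bigr)(w+u)^{m-1}=\Bigl(A(m-1)+k\binom{m-1}{2}\Bigr)w^{m-2}=\frac{m-1}{2}\bigl(2d-(m-2)k\bigr)\,w^{m-2}.
\end{equation}
Since $w^{m-2}=(z_{[m-2]})^{m-2}$, multiplying back by the untouched product $\prod_{p=1}^{m-2}(\cdots)$ reproduces exactly the integrand of $H(m-2)$; extracting the remaining coefficients and dividing by $d$ yields
\begin{equation}
H(m-1)=\frac{m-1}{2}\bigl(2d-(m-2)k\bigr)H(m-2),\qquad H(1)=1,
\end{equation}
the base case being immediate since for $m=2$ the single factor collapses to $dz_1$, whence $H(1)=\frac1d[z_1^2]\,dz_1^2=1$.

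Finally I would iterate from $\ell=m-1$ down to $\ell=2$, collecting $\prod_{\ell=2}^{m-1}\frac{\ell}{2}=(m-1)!/2^{m-2}$ and $\prod_{\ell=2}^{m-1}(2d-(\ell-1)k)=(2d-k)(2d-2k)\cdots(2d-(m-2)k)$, which is precisely the asserted formula. I expect the one genuinely delicate step to be the self-similarity identification: recognizing, after the $[z_{m-1}^2]$-extraction, that the surviving $w^{m-2}$ recombines with the untouched factors to give back $H(m-2)$ rather than a shifted or corrected variant. The rewriting $(d-pk)z_p+kz_{[p]}=(d-(p-1)k)z_p+kz_{[p-1]}$ is exactly what makes this transparent, and confirming that no boundary terms spoil the telescoping is the main thing to verify with care.
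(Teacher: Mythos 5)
Your proposal is correct and follows essentially the same route as the paper: specialize Proposition~\ref{prop:onepart} to $r=1$, $s=m-1$, keep only the leading terms of the $\varsigma$-functions by the genus-zero degree count, cancel the $\nu_j$'s, and establish the recursion $H(m-1)=\tfrac{m-1}{2}(2d-(m-2)k)H(m-2)$ with $H(1)=1$. Your explicit binomial extraction of $[z_{m-1}^2]$ is just a spelled-out version of the paper's ``differentiate twice with respect to $z_{m-1}$'' step, and the rewriting $(d-pk)z_p+kz_{[p]}=(d-(p-1)k)z_p+kz_{[p-1]}$ correctly justifies the self-similarity the paper leaves implicit.
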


\begin{remark}
    Specializing the above proposition to $k=1$ one immediately recovers formula \ref{eq:CMSHurwitz}. The formula can also be written in terms of generalized Stirling numbers of the first kind.
    Similar formulae appear in \cite{BR, CP, GT}.
\end{remark}

\begin{remark}
 The formulae above in proposition \ref{prop:onepartexpl} can be generalized to arbitrary higher genus and to general $(r+1)$-completed cycles (or even uncompleted, by removing the necessary corrections) by handling the combinatorics of coefficient extraction in proposition \ref{prop:onepart}.
\end{remark}

\subsection{Vanishing in genus zero}

We conclude this section with one more link with the existing literature. In \cite{CMS} the vanishing of leaky Hurwitz numbers in genus zero with descendant is completely characterised. By imposing the absence of descendants in that paper and specialising the numbers in this paper to $r=1$ and $g=0$, the result in \cite{CMS} can be stated as follows. The number:
$$
h_{\mu, \nu}^{\wr,k,r=1, s=m+n-2, \circ}
$$
vanishes if and only if there exists a positive integer $K$ such that
\begin{equation}\label{eq:vanishingcond}
k = 2K, \qquad \mu_i = K\tilde{\mu_i}, \qquad \nu_i = K\tilde{\nu_i}, \qquad \qquad \text{ for all } i=1, \dots, n; \; j=1, \dots, m.
\end{equation}
We observe that under the conditions listed in \eqref{eq:vanishingcond} the leaky Hurwitz number simplifies, factorising as a constant times a leaky Hurwitz number with leaky parameter $k=2$. 
\begin{equation}
h_{\mu, \nu}^{\wr,k,r=1, s=m+n-2, \circ} = K^{s-1} \cdot h_{\tilde{\mu}, \tilde{\nu}}^{\wr,k'=2,r=1, s=m+n-2, \circ}
\end{equation}
This can be seen by specialising the connected version of \cref{thm:Hzeta} to $r=1$ and genus zero (i.e. selecting the leading term from all $\varsigma(x)$ functions). Since we are extracting the coefficient of $[z_1^2 \cdots z_{m+n-2}^2]$ out of the expression, the number of factors in the product on $L(P)$ must be equal to $2m + 2n -3$, which produces a prefactor of $K^{2n+2m-3}$ that simplifies against a prefactor of $K^{-m-n}$ arising from the denominator $\prod_i \mu_i \prod_j \nu_j$ after factorising out $K$ from each of the parts.
This can be regarded as proof that, in genus zero, leaky Hurwitz numbers are homogeneous polynomials in $(\mu,\nu,k)$, recall \ref{PolyChambers}. In fact, any integer could take the place of $2$ in the argument above.

\vspace{1cm}
\section{Cut-and-join operators}
\label{sec:cutnjoin}

The goal of this section is to provide a $k$-leaky $(r+1)$-completed cycles cut-and-join operator, which is given by the following definition.

\begin{definition}
For any natural number $k \geq 0$ let us define the $k$-leaky $(r+1)$-completed cycles cut-and-join operator $Q^{\wr}_{k,r+1}$ by:
\begin{equation}\label{def:cutandjoin}
\sum_{r \geq -1} Q^{\wr}_{k,r+1} z^{r+1} 
= \frac{1}{\varsigma(z)}\sum_{n=0}^\infty \frac{1}{n!} \sum_{\substack{k_1,\dots,k_m>0 \\ \ell_1, \dots, \ell_{n-m} < 0 \\ \sum k_i = \sum \ell_j + k} } \prod_{i=1}^m \varsigma (k_i z) \frac{p_{k_i}}{k_i} \prod_{j=1}^{n-m} \varsigma (\ell_j z) \frac{\partial}{\partial p_{\ell_j}}.
\end{equation}
\end{definition}

Introduce the following generating series for the leaky disconnected Hurwitz numbers with completed cycles:
\begin{align} 
& \mathcal{H}^{\wr,k, r+1}(\beta, \vec{p}, \vec{q})  \coloneqq
\sum_{\substack{n,m = 1 \\ s = 0}} \; \sum_{
\substack{\mu_1, \dots, \mu_m = 1\\ \nu_1, \dots, \nu_n = 1}
}
h^{\wr,k, r+1}_{g;\mu,\nu} \; \frac{\beta^s}{s!} \;
\frac{p_{\mu_1} \cdots p_{\mu_m}}{m!} \;
\frac{q_{\nu_1} \cdots q_{\nu_n}}{n!}.
\end{align}
As before $s$ is determined by the Riemann-Hurwitz constraint as $s = \frac{2g-2+m+n}{r}$.

\begin{theorem}\label{thm:cutandjoin}
The following partial differential equation holds
\begin{equation}
\frac{\partial}{\partial \beta} \mathcal{H}^{\wr,k, r+1} = Q^{\wr}_{k,r+1} \mathcal{H}^{\wr,k, r+1}  
\end{equation}

\begin{proof}
The proof essentially follows immediately from the proof of Boson-Fermion correspondence \eqref{eq:bosonfermion} under the map from the charge zero sector of the semi-infinite wedge to the algebra of power series 
\begin{align}
\Phi: \mathcal{V}_0 & \longrightarrow \C\left[\left[p_i, \frac{\partial}{\partial p_i}\right]\right]_{i \in \N}
\\
\alpha_n & \mapsto p_n \qquad \text{ if } n > 0
\\
\alpha_n & \mapsto n\frac{\partial}{\partial p_n} \qquad \text{ if } n < 0
\end{align}
In fact is immediately follows that by equation \eqref{eq:bosonfermion} and definition \ref{def:cutandjoin} under the map $\Phi$ one has
\begin{equation}
\Phi\left( [w^{r+1}]. \E_{-k}(w) \right) = Q^{\wr}_{k,r+1}.
\end{equation}
We however provide for the reader's convenience a more detailed proof following \cite{SSZ} and employing some lemmata proved there. First, we can restate the generating series naturally as a VEV, as :
\begin{equation} \label{Eq:H}
\mathcal{H}^{\wr,k, r+1}(\beta, \vec{p}, \vec{q}) 
=
 \left\langle 
 \exp\left(\sum_{\ell \geq 1} \frac{p_\ell \alpha_\ell}{\ell} \right) 
\exp\left(\beta [z^{r+1}]\E_{-k}(z)\right) 
\exp\left(\sum_{v \geq 1} \frac{q_v \alpha_{-v}}{v} \right) 
\right \rangle.
\end{equation}
so that:
\begin{align} \label{Eq:dhdbeta}
\frac{\partial \mathcal{H}^{\wr,k, r+1}}{\partial \beta} 
=
 \left\langle 
  \exp\left(\sum_{\ell \geq 1} \frac{p_\ell \alpha_\ell}{\ell} \right) 
 [w^{r+1}]\E_{-k}(w) 
 \exp(\beta [z^{r+1}]\E_{-k}(z)) 
\exp\left(\sum_{v \geq 1} \frac{q_v \alpha_{-v}}{v} \right) 
 \right\rangle,
\end{align}
The first trick consists in multiplying and dividing by the factor $\exp\left(\sum_{\ell \geq 1} \frac{p_\ell \alpha_\ell}{\ell} \right)$ so that the RHS becomes:
\begin{align} \label{Eq:dhdbeta2}
 [w^{r+1}] \left\langle 
  \exp\left(\sum_{\ell \geq 1} \frac{p_\ell \alpha_\ell}{\ell} \right) 
\E_{-k}(w) 
\exp\left(-\sum_{\ell \geq 1} \frac{p_\ell \alpha_\ell}{\ell} \right) 
\exp\left(\sum_{\ell \geq 1} \frac{p_\ell \alpha_\ell}{\ell} \right) 
 \exp(\beta [z^{r+1}]\E_{-k}(z)) 
\exp\left(\sum_{v \geq 1} \frac{q_v \alpha_{-v}}{v} \right) 
 \right\rangle.
\end{align}
This insertion is functional to the use of the standard Hadamard lemma in Lie theory:
$$
e^{A} B e^{-A} = \sum_{\ell=0} \frac{1}{\ell!}\underbrace{ [A, \dots [A,[A,B]] \dots]}_{\ell \text{ commutators}},
$$
applied to the first three factors of the VEV. By \cite{SSZ}[Lemma 5.4.] one expresses the RHS as:
\begin{align} \label{Eq:dhdbeta3}
 [w^{r+1}] 
 \sum_{n=0}\frac{1}{n!} \sum_{k_1, \dots, k_n =1 } \varsigma(wk_i) \frac{p_{k_i}}{k_i}
 \left\langle 
\E_{-k+\sum_{k_i}}(w) 
\exp\left(\sum_{\ell \geq 1} \frac{p_\ell \alpha_\ell}{\ell} \right) 
 \exp(\beta [z^{r+1}]\E_{-k}(z)) 
\exp\left(\sum_{v \geq 1} \frac{q_v \alpha_{-v}}{v} \right) 
 \right\rangle.
\end{align}
By \cite{SSZ}[Lemma 5.6] one can get rid of the leftmost operator replacing it with a differential operator acting on the VEV:
\begin{align} \label{Eq:dhdbeta4}
 [w^{r+1}] 
 \sum_{n=0}\frac{1}{n!} \sum_{k_1, \dots, k_n =1 } \varsigma(wk_i) \frac{p_{k_i}}{k_i} \mathcal{O}_{-k+\sum_{k_i}}(w) 
 \left\langle 
\exp\left(\sum_{\ell \geq 1} \frac{p_\ell \alpha_\ell}{\ell} \right) 
 \exp(\beta [z^{r+1}]\E_{-k}(z)) 
\exp\left(\sum_{v \geq 1} \frac{q_v \alpha_{-v}}{v} \right) 
 \right\rangle,
\end{align}
where
\begin{equation}
 \mathcal{O}_{K}(w) \coloneqq \frac{1}{\varsigma(w)}\sum_{n=1}\frac{1}{n!} \sum_{\substack{\ell_1, \dots, \ell_n = 1 \\ \sum_j \ell_j = K}}  \prod_{j=1}^n \varsigma(w\ell_j)\frac{\partial}{\partial p_{\ell_j}}.
\end{equation}
The proof is now completed: the VEV equals $\mathcal{H}^{\wr,k, r+1}$ by definition, whereas the operator acting on the left equals $[w^{r+1}]. \E_{-k}(w)$ by the Boson-Fermion correspondence \eqref{eq:bosonfermion}.
\end{proof}
\end{theorem}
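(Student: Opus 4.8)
The plan is to reduce the entire statement to a single operator identity under the Boson--Fermion correspondence. First I would rewrite the generating series \eqref{Eq:H} as a vacuum expectation value: summing $p_{\mu_1}\cdots p_{\mu_m}/m!$ over all choices of parts reproduces $\exp(\sum_{\ell\ge1}p_\ell\alpha_\ell/\ell)$ against the covacuum, the analogous $\nu$-sum reproduces $\exp(\sum_{v\ge1}q_v\alpha_{-v}/v)$ against the vacuum, and the weight $\beta^s/s!$ together with $s$ insertions of $[z^{r+1}]\E_{-k}(z)$ assembles into $\exp(\beta[z^{r+1}]\E_{-k}(z))$; the Riemann--Hurwitz constraint fixing $s$ is then automatic from the energy grading. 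With this in hand, the theorem becomes the assertion that the map $\Phi$ carries left multiplication by $[w^{r+1}]\E_{-k}(w)$ to the differential operator $Q^{\wr}_{k,r+1}$, which I would verify by expanding \eqref{eq:bosonfermion}, extracting $[w^{r+1}]$, and separating the summation indices $k_i$ into their positive parts (which $\Phi$ sends to multiplication by $p_{k_i}$) and negative parts (which $\Phi$ sends to derivations in the $p$-variables); this reproduces \eqref{def:cutandjoin} term by term.

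To make the argument self-contained without treating $\Phi$ as a black box, I would instead differentiate \eqref{Eq:H} in $\beta$, which inserts one factor $[w^{r+1}]\E_{-k}(w)$ just left of the $\beta$-exponential, and then commute this factor to the far left past $\exp(\sum_\ell p_\ell\alpha_\ell/\ell)$. The mechanism is to insert the identity $\exp(-\sum_\ell p_\ell\alpha_\ell/\ell)\exp(\sum_\ell p_\ell\alpha_\ell/\ell)$ and apply the Hadamard lemma $e^ABe^{-A}=\sum_{\ell\ge0}\frac{1}{\ell!}[A,[A,\dots,[A,B]]]$ to the conjugation of $\E_{-k}(w)$ by $A=\sum_\ell p_\ell\alpha_\ell/\ell$. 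Using the $\E$--$\alpha$ commutation relations, equivalently \cite{SSZ}[Lemma 5.4], this conjugation yields a sum over tuples of positive integers $k_1,\dots,k_n$, each term carrying the creation weight $\prod_i\varsigma(wk_i)p_{k_i}/k_i$ and raising the surviving operator to $\E_{-k+\sum k_i}(w)$.

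The final step is to remove this leftmost $\E$-operator in favor of a differential operator in the $p$-variables. Here I would invoke \cite{SSZ}[Lemma 5.6], which replaces an operator $\E_{K}(w)$ standing at the far left of a VEV of the present shape by the annihilation operator $\mathcal{O}_K(w)=\frac{1}{\varsigma(w)}\sum_{n\ge1}\frac{1}{n!}\sum_{\sum_j\ell_j=K}\prod_j\varsigma(w\ell_j)\partial_{p_{\ell_j}}$ acting on the remaining series. Combining the creation factors produced by the conjugation with the annihilation factors from $\mathcal{O}_K(w)$ reconstitutes exactly the normally-ordered sum \eqref{def:cutandjoin}, while the leftover VEV is unchanged and equals $\mathcal{H}^{\wr,k,r+1}$, giving $\partial_\beta\mathcal{H}^{\wr,k,r+1}=Q^{\wr}_{k,r+1}\mathcal{H}^{\wr,k,r+1}$.

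I expect the main obstacle to be purely combinatorial bookkeeping rather than any conceptual difficulty: one must track how the two separate prefactors $1/\varsigma(w)$ (one from \eqref{eq:bosonfermion}, one inside $\mathcal{O}_K(w)$) combine, verify that the symmetry factors match after choosing which indices become creation and which become annihilation operators, and confirm that normal ordering is respected once positive- and negative-energy $\alpha$'s are interleaved. Checking that no sign or combinatorial factor is dropped in passing from the Hadamard expansion to the closed form \eqref{def:cutandjoin} is the one point that requires genuine care.
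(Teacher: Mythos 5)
Your proposal is correct and follows essentially the same route as the paper: both reduce the theorem to the operator identity $\Phi([w^{r+1}].\E_{-k}(w)) = Q^{\wr}_{k,r+1}$ via the Boson--Fermion correspondence, and both then give the detailed version by differentiating the VEV form of $\mathcal{H}^{\wr,k,r+1}$ in $\beta$, conjugating $\E_{-k}(w)$ past $\exp(\sum_\ell p_\ell\alpha_\ell/\ell)$ with the Hadamard lemma and \cite{SSZ}[Lemma 5.4], and eliminating the leftmost $\E$-operator with \cite{SSZ}[Lemma 5.6]. The bookkeeping concerns you flag at the end are exactly the points the paper also delegates to the cited lemmata.
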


\vspace{1cm}
\section{Conclusion and open problems for future research}
\label{sec:conclusions}

We conclude the paper with some observations and propose some research questions related to neighboring fields of mathematics and physics for the future development of leaky Hurwitz numbers.

\begin{itemize}
\item[(Q1)] Several types of Hurwitz numbers are known to satisfy ELSV-type formulae, that is, they can be computed as integrals over moduli spaces of curves of certain cohomology classes. It would be interesting to determine an ELSV formulae for single connected completed cycles leaky Hurwitz numbers. In other words, establish the leaky equivalent of Zvonkine conjecture \cite{Z} (now theorem, see \cite{SolZ} and references therein). This is particularly interesting because $k$-leaky Hurwitz numbers with descendants have been initially \emph{defined} in terms of an ELSV-type formula \cite{CMS}, by intersecting the logarithmic $k$-double ramification cycle with the branch cycle in the logarithmic Chow ring found in \cite{CMR} and monomials of psi classes.
\item[(Q2)] By the work of Okounkov and Pandharipande, completed cycles are intrinsically related to Gromov-Witten theory via the GW/H correspondence \cite{OP}. Develop if possible a leaky Gromov-Witten theory corresponding to the leaky Hurwitz theory.
\item[(Q3)] Can one establish Topological recursion in the sense of Eynard and Orantin \cite{EO} for any variation of leaky Hurwitz numbers? If so, exhibit a spectral curve, possibly a quantum curve as well, and understand the necessary possible variations to the recursive procedure to produce the numbers. It would be interesting to understand what is the effect of having non-zero energy operators in the middle of the VEV on the Topological recursion procedure.
\item[(Q4)] Expand on the integrability satisfied by the partition function of Leaky Hurwitz numbers and describe the differences with the non-leaky case.
\item[(Q5)] Monotonicity (both strict and weak) in Hurwitz theory has proved over the last few years to provide fruitful results related to chamber polynomiality, wall crossing formulae, cut and join equations, topological recursion, integrability and ELSV formulae \cite{ALS, AKL, DK15}. It would be interesting to study if possible monotone leaky Hurwitz numbers.
\item[(Q6)] Same as in the previous question but for a leaky version of $b$ Hurwitz numbers in the sense of \cite{Hur-b} and references within.
\end{itemize}

\vspace{1cm}

\end{document}